\numberwithin{equation}{section}
\theoremstyle{plain}
\newtheorem{theorem}{Theorem}[section]
\newtheorem{proposition}[theorem]{Proposition}
\theoremstyle{definition}
\newtheorem{definition}[theorem]{Definition}
\newcommand{\refE}[1]{(\ref{E:#1})}
\newcommand{\refS}[1]{Section~\ref{S:#1}}
\newcommand{\refSS}[1]{Section~\ref{SS:#1}}
\newcommand{\refT}[1]{Theorem~\ref{T:#1}}
\newcommand{\refP}[1]{Proposition~\ref{P:#1}}
\newcommand{\refD}[1]{Definition~\ref{D:#1}}
\newcommand{\R}{\ensuremath{\mathbb{R}}}
\newcommand{\im}{\mathrm{\;Im\;}}
\newcommand{\C}{\ensuremath{\mathbb{C}}}
\newcommand{\N}{\ensuremath{\mathbb{N}}}
\renewcommand{\P}{\ensuremath{\mathbb{P}}}
\newcommand{\Z}{\ensuremath{\mathbb{Z}}}
\newcommand{\F}{\ensuremath{\mathbb{F}}}
\newcommand{\A}{\ensuremath{\mathcal{A}}}
\renewcommand{\i}{{\,\mathrm{i}\,}}
\newcommand{\scp}[2]{{\langle #1,#2\rangle}}
\newcommand{\w}{\omega}
\newcommand{\cim}{C^\infty(M)}
\newcommand{\Cim}{C^\infty(M)}
\renewcommand{\d}{\partial}
\newcommand{\db}{\overline{\partial}}
\newcommand{\hh}{\hat h}
\newcommand{\e}{\mathrm{e}}
 \newcommand{\ghm}[1][m]{\Gamma_{hol}(M,L^{#1})}
 \newcommand{\gulm}[1][m]{\Gamma_{\infty}$(M,L^{#1})}
 \renewcommand{\d}{\partial}
 \newcommand{\wb}{\overline{w}}
 \newcommand{\zb}{\overline{z}}
  \newcommand{\pnc}[1][N]{\ensuremath{\P^{#1}(\C)}}
 \newcommand{\PGL}{\mathrm{PGL}}
 \newcommand{\volu}{\mathrm{vol}}
 \newcommand{\skp}[2]{{\langle #1,#2\rangle}}
 \newcommand{\skps}[3]{{\langle #1,#2\rangle}_{#3}}
 \newcommand{\skpsm}[3]{{\langle #1,#2\rangle}^{(m)}_{#3}}
\def\Lpm{{\mathrm{L}}^2(M,L^m)}
\def\Lqv{{\mathrm{L}}^2(Q,\mu)}
\def\Tfm{T_f^{(m)}}
\def\Tgm{T_g^{(m)}}
\def\Tfgm{T_{\{f,g\}}^{(m)}}
\def\Tma#1{T_{#1}^{(m)}}
\def\Hm{\mathcal {H}^{(m)}}
\def\Hc{{\mathcal {H}}}
\def\d{\partial}
\def\Pfz#1{\frac {\partial #1}{\partial z}}
\def\Pfzb#1{\frac {\partial#1}{\partial\overline{z}}}
\def\End{\mathrm{End}}
\def\Tr{\operatorname{Tr}}
\def\volu{\operatorname{vol}}
\def\hh{\hat h}
\renewcommand{\Im}{I^{(m)}}
\newcommand{\Om}{\Omega}
\newcommand{\Ome}{\Omega^{(m)}_\epsilon}
\newcommand{\eghm}{\mathrm{End}(\Gamma_{hol}(M,L^{(m)}))}
\newcommand{\pimh}{\hat\Pi^{(m)}}
\newcommand{\Bm}{\mathcal{B}_m}
\newcommand{\Pim}{\Pi^{(m)}}
\def\la{\lambda}
\def\a{\alpha}
\def\Cim{C^{\infty}(M)}
\def\ghm{\Gamma_{hol}(M,L^{m})}
\def\ghmo{\Gamma_{hol}(M,L^{m_0})}
\def\gulm{\Gamma_{\infty}(M,L^{m})}
\newcommand{\eam}{e_{\alpha}^{(m)}}
\newcommand{\ebm}{e_{\beta}^{(m)}}
\def\pbar{\overline{\partial}}
\newcommand{\sm}{\sigma^{(m)}}
\newcommand{\svm}{{\check\sigma}^{(m)}}
\newcommand{\epsm}{\epsilon^{(m)}}
\newcommand{\hm}{h^{(m)}}
\begin{document}


\title[Berezin-Toeplitz quantization]{Berezin-Toeplitz quantization
\\ for compact K\"ahler manifolds.
\\ A Review of Results}
\author[Martin Schlichenmaier]{Martin Schlichenmaier}
\address[Martin Schlichenmaier]
{University of  Luxembourg, Mathematics Research Unit, FSTC,
6, rue Coudenhove-Kalergi
L-1359 Luxembourg-Kirchberg,
Grand Duchy  of Luxembourg}
 \email{martin.schlichenmaier@uni.lu}
\begin{abstract}
This article is a review on Berezin-Toeplitz operator and
Berezin-Toeplitz
deformation
quantization for compact quantizable K\"ahler manifolds.
The basic objects, concepts, and results are given.
This concerns the correct semi-classical limit behaviour of the
operator quantization, the unique Berezin-Toeplitz deformation 
quantization (star product), covariant and contravariant
Berezin symbols, and Berezin transform.
Other related objects and constructions are also discussed.
\end{abstract}
\subjclass{Primary: 58F06, 53D55; Secondary:  58G15, 53C55, 32C17, 81S10}
\date{December 30, 2009}
\maketitle
\tableofcontents
\section{Introduction}\label{S:intro}
For quantizable K\"ahler manifolds the 
Berezin - Toeplitz (BT) quantization scheme, both the 
 operator quantization and the  deformation quantization, supplies 
canonically defined 
quantizations. 
Some time ago, in joint work with Martin Bordemann and
Eckhard Meinrenken, the author of this review showed that
for compact K\"ahler manifolds it is a well-defined
quantization scheme with correct semi-classical limit
\cite{BMS}.

What makes the Berezin-Toeplitz quantization scheme so attractive is
that it does not depend on further choices and that it
does not only produce a formal deformation quantization, but
one which is deeply related to some operator calculus.

{}From the point of view of classical mechanics 
compact K\"ahler manifolds appear as phase space manifolds
of  restricted systems or of reduced systems.
A typical  example of its appearance 
is given by the spherical pendulum which after
reduction has as phase-space  the 
complex projective space.

\medskip

Very recently, inspired by fruitful applications of the basic techniques of 
the Berezin-Toeplitz scheme
beyond the quantization of classical systems, the interest 
in  it revived considerably.

For example these techniques 
show up in non-commutative geometry. More precisely, they appear in the
approach to non-commutative geometry using fuzzy manifolds.
The quantum spaces of the BT quantization of level $m$, defined
in \refS{btq} further down, are finite-dimensional and the
quantum operator of level $m$ constitute finite-dimensional
non-commutative matrix algebras.
This is the arena of non-commutative fuzzy manifolds 
and  gauge theories over them.
The classical limit, the commutative manifold, is obtained as
limit $m\to\infty$.
The name {\it fuzzy sphere}  was coined by John Madore \cite{Mad} 
for a certain quantized version
of the Riemann sphere.
It turned out to be a quite productive direction in the
non-commutative geometry approach to quantum field theory.
It is impossible to give a rather complete list of 
people working in this approach.
The following is a rather erratic and random choice of
references
\cite{Bal1}, \cite{Bal2}, \cite{Sae}, \cite{Dol},
\cite{Asch}, \cite{Gro1},  \cite{Gro2}, \cite{Car}.

\medskip

Another appearance of Berezin-Toeplitz 
quantization techniques as basic ingredients is in the
pioneering work of J\o rgen Andersen on the mapping class group (MCG)
of surfaces in the context of Topological Quantum Field Theory (TQFT).
Beside other results, he was able to proof the asymptotic faithfulness
of the mapping group action on the space of covariantly
constant sections of the Verlinde bundle with respect
to the Axelrod-Witten-de la Pietra and Witten connection
\cite{Andsu,Andab}, see also \cite{Schlsu}.
Furthermore, he showed that the 
MCG does not have Kazhdan's property $T$. 
Roughly speaking, a group has {\it property T} 
says that the identity representation
is isolated in the space of all unitary representations of the group
\cite{Andt}.
In these applications the manifolds to be quantized are the moduli spaces
of certain flat connections on Riemann surfaces or, 
equivalently, the moduli space of stable algebraic vector bundles over
smooth projective curves.
Here further exciting research is going on. 
In particular, in the realm of  TQFT and the construction of
modular functors
\cite{AG}, \cite{AU1,AU2}.

\medskip

In general quite often moduli spaces come with a K\"ahler structure
which is quantizable. Hence, it is not surprising that the  
Berezin-Toeplitz quantization 
scheme is of importance in moduli space problems.
Non-commutative deformations, and a quantization is a
non-commutative
deformation, yield also informations about the commutative
situation.
These aspects clearly need further investigations.

\medskip

There are a lot of other applications on which work has already
been done, recently started, or can be expected.
As the Berezin - Toeplitz scheme has become a basic tool
it seems the right time to collect the techniques and results
in such a review.
We deliberately concentrate on the case of compact 
K\"ahler manifolds. In particular, we stress the methods
and results valid for all of them.
Due to ``space-time'' limitations we will not deal with
the non-compact situation. In this situation case by case
studies of the examples, or class of examples are needed.
See \refSS{noncompact}
for references to some of them.
Also we have to skip presenting recent attempts to deal with special
singular situations, like orbifolds, but see at least
\cite{MaMar,MaMar1}, \cite{Charl1}.

\medskip

Of course, there are other reviews presenting similar quantization schemes.
A very incomplete list  is the following
\cite{AD}, \cite{AE}, \cite{Ali}, \cite{Stern}, \cite{DiSt}.

\medskip

This review is self-contained in the following sense.
I try to explain all notions and concepts needed to understand the
results and theorems only assuming some  background in modern
geometry and analysis.
And as such it should be accessible for a newcomer to the 
field (both for mathematicians as for physicists)
and help him to enter these interesting research directions.
It is not self-contained in the strict sense as it does 
supply only those proofs or sketches of proofs which are
either not available elsewhere or are essential for the understanding
of the statements and concepts.
The review does not require a background in  quantum physics as 
only mathematical aspects of quantizations are touched on.

\section{The set-up of geometric quantization}\label{S:set}
In the following I will recall the principal set-up of geometric
quantization which is usually done for symplectic manifolds in the
case when the manifold is a K\"ahler manifold.

\subsection{K\"ahler manifolds}
We will only consider  phase-space manifolds which carry the
structure of a 
K\"ahler manifold $\ (M,\w)$.
Recall that in this case $M$ is a complex manifold and 
$\w$, the K\"ahler form, is
a non-degenerate closed positive $(1,1)$-form.

If the complex dimension of $M$ is $n$ then the K\"ahler form $\w$ can be 
written with respect to local holomorphic coordinates
${\{z_i\}}_{i=1,\ldots,n}$ as 
\begin{equation}
\w=\i\sum_{i,j=1}^{n} g_{ij}(z)dz_i\wedge d\bar z_j,
\end{equation}
with local functions $g_{ij}(z)$ such that the 
matrix $(g_{ij}(z))_{i,j=1,\ldots,n}$ is hermitian and
positive definite.

Later on we will assume that $M$ is a compact K\"ahler manifold.

\subsection{Poisson algebra}
Denote by $\Cim$ the algebra of 
complex-valued (arbitrary often) differentiable functions
with the point-wise multiplication as associative product.
A symplectic form on a differentiable manifold is a closed
non-degenerate
2-form. In particular, 
we can consider our  K\"ahler form $\w$ as a symplectic form.

For symplectic manifolds we can introduce on 
$\Cim$ a Lie algebra structure, the Poisson bracket
{\em Poisson bracket} $\{.,.\}$, in the following way.
First we a
assign to every  $f\in\Cim$ its {\em Hamiltonian vector field} $X_f$, and 
then to every pair of functions $f$ and $g$ the 
{\em Poisson bracket} $\{.,.\}$ via
\begin{equation}
\label{E:Poi}
\w(X_f,\cdot)=df(\cdot),\qquad  
\{\,f,g\,\}:=\w(X_f,X_g)\ .
\end{equation}
One verifies that this is indeed a Lie algebra and that furthermore
we have the Leibniz rule
$$
\{fg,h\}=f\{g,h\}+\{f,h\}g,\qquad  \forall f,g,h\in\Cim.
$$
Such a compatible structure is called a
{\em Poisson algebra}.

\subsection{Quantum line bundles}
A {\em quantum line bundle} for a given symplectic manifold $(M,\w)$ is 
a triple $(L,h,\nabla)$, where $L$ is a complex line bundle, 
$h$ a Hermitian metric   on $L$, and
 $\nabla$ a connection  compatible with the metric $h$
such that the (pre)quantum condition 
\begin{equation}\label{E:quant}
\begin{gathered}
\mathrm{curv}_{L,\nabla}(X,Y):=\nabla_X\nabla_Y-\nabla_Y\nabla_X-\nabla_{[X,Y]}
=
-\i\w(X,Y),\\
\text{ resp.} \quad \mathrm{curv}_{L,\nabla}=-\i\w \ 
\end{gathered}
\end{equation}
is fulfilled.
A symplectic manifold is called {\em quantizable} if there exists
a quantum line bundle.

In the situation of K\"ahler manifolds we require for a quantum 
line bundle that it is holomorphic and that the connection is
compatible both with the metric $h$ and the complex structure of
the bundle.
In fact, by this requirement $\nabla$ 
will be uniquely fixed.
If we choose local holomorphic  coordinates
on the manifold and  and a 
local holomorphic frame of the bundle
the metric $h$ will be  
represented by 
a function $\hh$.
In this case the  curvature in the bundle can be given 
by $\db\d\log\hh$ and the quantum condition reads as  
\begin{equation}\i\db\d\log\hh=\w\ .
\end{equation}

\subsection{Example: The Riemann sphere} 
The Riemann sphere is  
the complex projective line
 $\P^1(\C)=\C\cup \{\infty\}\cong S^2$.
 With respect to the quasi-global
coordinate $z$  the form can be given as
\begin{equation}
\w=\frac {\i}{(1+z\zb)^2}dz\wedge d\zb\ .
\end{equation}
For the Poisson bracket one obtains
\begin{equation}
\{f,g\}=\i(1+z\zb)^2\left(\Pfzb f\cdot\Pfz g-\Pfz f\Pfzb g\right)\ .
\end{equation}
Recall that the points in $\P^1(\C)$ correspond to lines in $\C^2$
passing through the origin.
If we assign to every point in $\P^1(\C)$ the line it represents we
obtain a holomorphic line bundle, called the tautological 
line bundle. The hyper plane section bundle is dual to the
tautological
bundle. It turns out that it is a quantum line bundle.
Hence $\P^1(\C)$ is quantizable.

\subsection{Example: The complex projective space}
Next we consider  the $n$-dimensional  complex projective space
 $\P^n(\C)$.
The example above can be extended to the projective
space of any dimension.
The K\"ahler form is given by the Fubini-Study form
\begin{equation}
\w_{FS}:=
\i\frac 
{(1+|w|^2)\sum_{i=1}^ndw_i\wedge
d\wb_i-\sum_{i,j=1}^n\wb_iw_jdw_i\wedge d\wb_j} {{(1+|w|^2)}^2}
\ .
\end{equation}
The coordinates 
$w_j$, $j=1,\ldots, n$ are affine coordinates
$w_j=z_j/z_0$ on the affine chart 
\newline
$U_0:=\{(z_0:z_1:\cdots:z_n)\mid z_0\ne 0\}$.
Again, $\P^n(\C)$ is quantizable with the hyper plane section bundle
as quantum line bundle.

\subsection{Example: The torus}
The (complex-) one-dimensional torus
can be given as
$M = \C/\Gamma_\tau$ where $\ \Gamma_\tau:=\{n+m\tau\mid n,m\in\Z\}$
is a lattice with $\ \im \tau>0$.
As K\"ahler form we take
\begin{equation}
\w=\frac {\i\pi}{\im \tau}dz\wedge d\zb\ ,
\end{equation}
with respect to the coordinate $z$ on the
covering space $\C$.
Clearly this form is invariant under 
the lattice $\Gamma_\tau$ and hence
well-defined on $M$.
For the Poisson bracket one obtains
\begin{equation}
\{f,g\}=\i\frac{\im\tau}{\pi}\left(\Pfzb f\cdot\Pfz g-\Pfz f\Pfzb g\right)\ .
\end{equation}
The corresponding quantum line bundle is the theta line bundle
of degree 1, i.e. the bundle whose global sections are
scalar multiples of the Riemann theta function.

\subsection{Example: The unit disc and Riemann surfaces}
The unit disc 
\begin{equation}
\mathcal{D}:=\{z\in\C\mid  |z|<1\}
\end{equation} 
is a non-compact K\"ahler manifold. 
The K\"ahler form is given by
\begin{equation}\label{E:kfud}
\w=\frac {2\i}{(1-z\zb)^2}dz\wedge d\zb\ .
\end{equation}
Every compact Riemann surface $M$ of genus $g\ge 2$ can be
given 
as the quotient of the  unit disc 
 under the fractional linear transformations of
a Fuchsian subgroup of $SU(1,1)$.
If $\ R=
\begin{pmatrix}
 a&b\\ \overline{b} &\overline{a}
\end{pmatrix} \ $
with $\  |a|^2-|b|^2=1\ $ (as an element of $SU(1,1)$) then
the action is
\begin{equation}
z\mapsto R(z):=\frac {a z + b} {\overline{b} z + \overline{a}}\ .
\end{equation}
The K\"ahler form \refE{kfud}
is invariant under the fractional linear
transformations. Hence it defines a K\"ahler form on $M$.
The quantum bundle is the canonical bundle, i.e. the bundle
whose local sections are the holomorphic differentials.
Its global sections can be identified with the automorphic forms
of weight $2$ with respect to the Fuchsian group.

\subsection{Consequences of quantizability}
The above examples might create the wrong impression that every
K\"ahler manifold is quantizable. This is not the case. For example
only 
those higher dimensional tori complex tori are quantizable which
are abelian varieties, i.e. which admit enough theta functions.
It is well-known that for $n\ge 2$ a generic torus will not
be an abelian variety.
Why this implies that they will not be quantizable we will see in
a moment.

In the language of differential geometry a line bundle is called
 a positive line bundle if its curvature form (up to a factor
of $1/\mathrm{i}$)  is a positive form.
As the K\"ahler form is positive the quantum condition
\refE{quant} yields that a quantum line bundle  $L$
is a positive line bundle.

\subsection{Embedding into projective space}
\label{SS:embedd}
In the following we assume that $M$ is a quantizable compact 
K\"ahler manifold
with quantum line bundle $L$.
Kodaira's embedding theorem   says that $L$ 
is  ample, i.e. that there exists  a certain
tensor power $L^{m_0}$ of $L$ 
such that 
the global holomorphic sections of  $L^{m_0}$ 
can be used to embed the phase space manifold $M$ 
into the projective
space of suitable dimension.
The embedding is defined as follows.
Let $\ghmo$ be the vector space of global holomorphic 
sections of the bundle $L^{m_0}$. Fix  a basis 
$s_0,s_1,\ldots,s_N$.
We choose  local holomorphic coordinates $z$ for $M$ and
 a local holomorphic frame $e(z)$
for the bundle $L$.
After these choices the basis elements 
can be uniquely described by
local holomorphic functions
$\hat s_0,\hat s_1,\ldots,\hat s_N$ defined via $s_j(z)=\hat s_j(z)e(z)$.
The embedding is given by the map
\begin{equation}\label{E:embed}
M\hookrightarrow \pnc,\quad z\mapsto \phi(z)=
(\hat s_0(z):\hat s_1(z):\cdots:\hat s_N(z))\ .
\end{equation}
Note that the point $\phi(z)$ 
in projective space neither depends on the choice of
local coordinates nor on the choice of the local frame for the bundle $L$.
Furthermore 
a different choice of basis 
correspond to
a $\PGL(N,\C)$ action
on the embedding space and hence the embeddings
are projectively equivalent.

By this embedding quantizable compact K\"ahler manifolds are 
complex submanifolds of projective spaces.
By Chow's theorem \cite{SchlRS} they can be given as zero sets of
homogenous polynomials, i.e. they are smooth projective
varieties.
The converse is also true.
Given a smooth subvariety $M$ of $\P^n(\C)$ it will become a
K\"ahler manifold by restricting the Fubini-Study form.
The restriction of the hyper plane section bundle will be
an associated quantum line bundle.

At this place a warning is necessary.
the embedding is only an embedding as complex manifolds
not an isometric embedding as K\"ahler manifolds.
This means that in general 
$\phi^{-1}(\w_{FS})\ne \w$. See \refSS{pfsm} for results on
an ``asymptotic expansion'' of the  pullback.

A line bundles whose global holomorphic sections will define
an embedding into projective space, 
is called a {\it very ample line bundle}.
In the following we will assume that $L$ is already   very ample.
If $L$ is not very ample we choose  $m_0\in\N$  such that 
the bundle $L^{m_0}$ is very ample and take  
this bundle as quantum line bundle with respect to 
the rescaled K\"ahler form $m_0\,\w$  
on $M$. The   underlying complex manifold structure
will not change.

\section{Berezin-Toeplitz operators}
\label{S:btq}

In this section we will consider an operator quantization. 
This says that we will assign to each differentiable%
\footnote{differentiable will always mean differentiable to any 
order}
function $f$ on our K\"ahler manifold $M$  (i.e. on our ``phase
space'')
the Berezin-Toeplitz (BT) quantum operator $T_f$. 
More precisely, we will consider a whole family of operators $\Tfm$.
These operators are defined in a canonical way. As we know from
the Groenewold-van Howe theorem
we cannot  expect that the Poisson bracket 
on $M$ can be represented by the Lie algebra of operators if 
we require certain desirable conditions, see \cite{AM}
for further details.
The best we can expect is that we obtain it at least
``asymptotically''.
In fact, this is true.

In our context also the operator of geometric 
quantization exists.
At the end of this section we will discuss its relation to
the BT quantum operator. It will turn out that
if we take for the geometric quantization 
the K\"ahler polarization then they have the same
asymptotic behaviour.


\subsection{Tensor powers of the quantum line bundle}
Let $(M,\w)$ be a compact quantizable K\"ahler manifold and $(L,h,\nabla)$ a 
quantum line bundle. We assume  that $L$  is 
already very ample. We consider all its tensor powers
\begin{equation}
(L^m,h^{(m)},\nabla^{(m)}).
\end{equation}
Here  $L^m:=L^{\otimes m}$.
If
 $\hat h$ corresponds to the metric $h$ with respect to 
 a local holomorphic frame $e$
of the bundle $L$ then $\hat h^m$ corresponds to the metric $h^{(m)}$ 
with respect to the frame $e^{\otimes m}$ for the bundle $L^m$.
The connection $\nabla^{(m)}$ will be the induced connection.

We 
introduce a scalar product on the space of sections. 
In this review we adopt the convention that a hermitian
metric (and a scalar product) is anti-linear in the first
argument and linear in the second argument.
First we take the Liouville form
$\ \Omega=\frac 1{n!}\w^{\wedge n}\ $ as volume form  on $M$
and then set 
for  the scalar product
and the norm
\begin{equation}
\label{E:skp}
\langle\varphi,\psi\rangle:=\int_M h^{(m)} (\varphi,\psi)\;\Omega\  ,
\qquad
||\varphi||:=\sqrt{\langle \varphi,\varphi\rangle}\ ,
\end{equation}
on the space $\gulm$ of global $C^\infty$-sections.
Let $\Lpm$ be the  L${}^2$-completion of $\gulm$,  and
$\ghm$ be its (due to the compactness of $M$) 
finite-dimensional closed subspace of global holomorphic
sections.
Let 
\begin{equation}
\ \Pim:\Lpm\to\ghm\ 
\end{equation}
 be the projection.
\begin{definition}
For $f\in\Cim$ the {\it Toeplitz operator  $\Tfm$ (of level $m$)}
is defined by
\begin{equation}
\Tfm:=\Pim\, (f\cdot):\quad\ghm\to\ghm\ .
\end{equation}
\end{definition}
\noindent
In words: One takes a holomorphic section $s$ 
and multiplies it with the
differentiable function $f$. 
The resulting section $f\cdot s$ will only be  differentiable.
To obtain a holomorphic section, one has to project  it back
on the subspace of holomorphic sections.

The linear map
\begin{equation}
\Tma {}:\Cim\to \End\big(\ghm\big),\qquad  f\to \Tma f=\Pi^{(m)}(f\cdot)
\ , m\in\N_0\ .
\end{equation}
is the  {\it Toeplitz}  or {
\it Berezin-Toeplitz quantization map (of level $m$)}.
It will 
neither be a Lie algebra homomorphism nor
an associative algebra homomorphism as
in general
$$
T^{(m)}_f\, T^{(m)}_g=\Pi^{(m)}\,(f\cdot)\,\Pi^{(m)}\,(g\cdot)\,\Pi^{(m)}\ne
\Pi^{(m)}\,(fg\cdot)\,\Pi =T^{(m)}_{fg}.
$$

Furthermore, 
on a fixed  level $m$ it
is  a map
from the infinite-dimensional 
commutative algebra of functions to a noncommutative
finite-dimensional (matrix) algebra.
The finite-dimensionality is 
due to the compactness of $M$.
A lot of classical information will get lost.
 To recover this
information one has to  consider not just a single level $m$ but
all levels together.
\begin{definition}
The Berezin-Toeplitz quantization is the map
\begin{equation}
\Cim\to\prod_{m\in\N_0}\eghm,\qquad
f\to(\Tfm)_{m\in\N_0}.
\end{equation}
\end{definition}
We obtain a family of
finite-dimensional(matrix) algebras and a family of maps.
This infinite family should in some sense ``approximate'' the
algebra $\Cim$.

\subsection{Approximation results}
Denote for $f\in\Cim$ by $|f|_\infty$ the sup-norm of $\ f\ $ on $M$ and by 
\begin{equation}
||\Tfm||:=\sup_{\substack {s\in\ghm\\ s\ne 0}}\frac {||\Tfm s||}{||s||}
\end{equation}
the operator norm with respect to the norm \refE{skp}
on $\ghm$.
The following theorem was shown in 1994.
\begin{theorem}
\label{T:approx}
[Bordemann, Meinrenken, Schlichenmaier]
\cite{BMS}

\noindent
(a) For every  $\ f\in \Cim\ $ there exists a $C>0$ such that   
\begin{equation}\label{E:norma}
|f|_\infty -\frac Cm\quad
\le\quad||\Tfm||\quad\le\quad |f|_\infty\ .
\end{equation}
In particular, $\lim_{m\to\infty}||\Tfm||= |f|_\infty$.

\noindent
(b) For every  $f,g\in \Cim\ $ 
\begin{equation}
\label{E:dirac}
||m\i[\Tfm,\Tgm]-\Tfgm||\quad=\quad O(\frac 1m)\ .
\end{equation}

\noindent
(c) For every  $f,g\in \Cim\ $ 
\begin{equation}\label{E:prod}
||\Tfm\Tgm-T^{(m)}_{f\cdot g}||\quad=\quad O(\frac 1m)
\ .
\end{equation}
\end{theorem}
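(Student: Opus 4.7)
The plan is to reduce the problem to the symbolic calculus of generalized Toeplitz operators of Boutet de Monvel--Guillemin on a strictly pseudoconvex circle bundle, and then to read off the three statements as statements about principal and subprincipal symbols together with the weight-decomposition norm estimate. Concretely, I would pass to the dual bundle $L^*$ and consider the unit disc bundle $D\subset L^*$; because $L$ is positive, $D$ is strictly pseudoconvex and its boundary $Q=\d D$ is a smooth compact strictly pseudoconvex CR manifold equipped with the natural $S^1$-action coming from the fiberwise rotation. The Hardy space $\mathcal H(Q)\subset \Lq$ of CR-boundary values splits under this $S^1$-action as $\mathcal H(Q)=\bigoplus_{m\ge 0}\mathcal H_m(Q)$, and the standard identification $\mathcal H_m(Q)\cong \ghm$ is an isometry once $\Lq$ is normalized appropriately by the Liouville measure of $M$. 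Under this identification the level-$m$ projection $\Pim$ is the restriction of the Szeg\H{o} projector $\Pi:\Lq\to\mathcal H(Q)$ to the $m$-th isotypical component.

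Next, for $f\in\Cim$ I would pull $f$ back to $Q$ (writing the pullback again as $f$) and form the global generalized Toeplitz operator $T_f:=\Pi\,(f\cdot)\,\Pi$ on $\mathcal H(Q)$. Since $f$ is $S^1$-invariant, $T_f$ commutes with the circle action and its restriction to $\mathcal H_m(Q)$ is precisely $\Tfm$. The Boutet de Monvel--Guillemin calculus then says: (i) $T_f$ is a zeroth-order Toeplitz operator whose principal symbol on the symplectic cone $\Sigma\cong \R_+\times M\subset T^*Q$ is the pullback of $f$; (ii) the space of such operators is closed under composition and commutators, and one has the symbol rules $\sigma(T_fT_g)=fg$ and $\sigma\bigl(\i[T_f,T_g]\bigr)=\{f,g\}$; (iii) an operator $A$ of order $-k$ in this calculus satisfies $\|A_{|\mathcal H_m(Q)}\|=O(m^{-k})$. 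Applying (ii) gives that $T_fT_g-T_{fg}$ and $\i[T_f,T_g]-T_{\{f,g\}}$ are of order $-1$; restricting to $\mathcal H_m(Q)$ via (iii) yields exactly $\|\Tfm\Tgm-T^{(m)}_{fg}\|=O(1/m)$ and $\|\i[\Tfm,\Tgm]-\Tfgm\|=O(1/m)$. The factor of $m$ in (b) must then be accounted for: rescaling $[T_f,T_g]$ by $m$ corresponds to the fact that on $\mathcal H_m(Q)$ the identification between the Toeplitz calculus on $Q$ and the family of operators on $\ghm$ carries the derivative along the $S^1$-fiber to multiplication by $m$, so that the bracket relation sharpens to \refE{dirac}.

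Part (a) I would split into an upper and a lower bound. The upper bound $\|\Tfm\|\le |f|_\infty$ is immediate and purely Hilbert-theoretic: $\Tfm=\Pim (f\cdot)\Pim$ is the compression of a multiplication operator whose $\Lpm$-norm is bounded by $|f|_\infty$, and $\Pim$ is an orthogonal projection. For the lower bound, I would use the Rawnsley/Berezin coherent states $e^{(m)}_x\in\ghm$ associated to points $x\in M$. Choose $x_0\in M$ with $|f(x_0)|=|f|_\infty$. The diagonal of the Bergman kernel together with the standard stationary-phase or off-diagonal decay estimate for coherent states gives $\scp{e^{(m)}_{x_0}}{\Tfm e^{(m)}_{x_0}}/\|e^{(m)}_{x_0}\|^2 = f(x_0)+O(1/m)$, so that $\|\Tfm\|\ge |f|_\infty-C/m$. (Equivalently one can invoke the Berezin transform $I^{(m)}(f)=f+O(1/m)$, which follows from the same stationary-phase expansion of the Szeg\H{o}/Bergman kernel.)

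The main obstacle is the microlocal input: proving (ii) and (iii) of the Boutet de Monvel--Guillemin calculus in sufficient generality, and in particular constructing the Szeg\H{o} parametrix on $Q$ and verifying that $f\mapsto T_f$ is a homomorphism of symbol algebras up to lower order with the correct subprincipal term. This is where the positivity of $L$, the strict pseudoconvexity of $Q$, and the Fourier integral operator machinery all enter in an essential way; the remainder of the proof is then a matter of bookkeeping on symbols and of restricting operator orders to individual $m$-th Fourier components, plus the coherent-state argument for the sharp lower bound in \refE{norma}.
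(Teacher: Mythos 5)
Your proposal takes essentially the same route as the paper: for (b) and (c) the Boutet de Monvel--Guillemin Toeplitz structure on the circle bundle $Q$, with operator order converted into the $O(m^{-k})$ norm bound on $\Hm$ by compensating with powers of the fiber derivative $D_\varphi$ (which acts as multiplication by $m$ on $\Hm$); for (a) the trivial compression bound together with the coherent-state/Berezin-transform lower bound at a maximum point of $|f|$. The one slip is the intermediate claim that $\i[T_f,T_g]-T_{\{f,g\}}$ has order $-1$ --- the commutator symbol is homogeneous of degree $-1$ (equal to $-\i t^{-1}\{f,g\}$ on $\Sigma_t$) while $\sigma(T_{\{f,g\}})$ has degree $0$, so one must first multiply by $D_\varphi$ (the paper uses the order-zero, hence bounded, operator $D_\varphi^2[T_f,T_g]+\i D_\varphi T_{\{f,g\}}$) --- but you correctly identify and repair exactly this point when you account for the extra factor of $m$ in \refE{dirac}.
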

\noindent
These results are contained in  Theorem  4.1, 4.2,
and in Section 5 in \cite{BMS}.
We will indicate the proof for (b) and (c) in \refS{global}.
It will make reference to the symbol calculus of generalised 
Toeplitz operators as developed by Boutet-de-Monvel and Guillemin
\cite{BGTo}.
The original proof of (a) was quite involved and required 
Hermite distributions and related objects.
On the basis of the asymptotic expansion of the Berezin transform
\cite{KS} a more direct proof can be given. I will discuss this
in \refSS{normp}.

Only on the basis of this  theorem 
we are allowed to call our scheme a quantizing scheme.
The properties in the theorem might be 
rephrased as {\it the BT operator quantization has the
correct semiclassical limit}.

\subsection{Further properties}
From  \refT{approx} (c) the 
\begin{proposition}
Let $f_1,f_2,\ldots,f_r\in C^\infty(M)$ then
\begin{equation}
||T^{(m)}_{f_1\ldots f_r}-
T^{(m)}_{f_1}\cdots
T^{(m)}_{f_r}||=O(m^{-1})
\end{equation}
\end{proposition}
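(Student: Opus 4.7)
The plan is a straightforward induction on $r$, using Theorem~\ref{T:approx}(c) as the base case and Theorem~\ref{T:approx}(a) to control the operator norms of the individual Toeplitz operators. The case $r=1$ is trivial and the case $r=2$ is exactly \refE{prod}.

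For the inductive step, assuming the result for products of length $r-1$, I would insert the telescoping intermediate term $T^{(m)}_{f_1\cdots f_{r-1}}\,T^{(m)}_{f_r}$ and write
\begin{equation*}
T^{(m)}_{f_1\cdots f_r}-T^{(m)}_{f_1}\cdots T^{(m)}_{f_r}
=\bigl(T^{(m)}_{(f_1\cdots f_{r-1})\cdot f_r}-T^{(m)}_{f_1\cdots f_{r-1}}\,T^{(m)}_{f_r}\bigr)
+\bigl(T^{(m)}_{f_1\cdots f_{r-1}}-T^{(m)}_{f_1}\cdots T^{(m)}_{f_{r-1}}\bigr)\,T^{(m)}_{f_r}.
\end{equation*}
The first bracket is $O(m^{-1})$ by \refE{prod} applied to the pair of functions $f_1\cdots f_{r-1}$ and $f_r$. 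For the second, the inductive hypothesis gives that the difference in parentheses has operator norm $O(m^{-1})$, and by \refE{norma} we have $\|T^{(m)}_{f_r}\|\le |f_r|_\infty$, so submultiplicativity of the operator norm yields another $O(m^{-1})$ term. The triangle inequality then completes the induction.

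There is no real obstacle here; the only thing to keep track of is that the implicit constants in the $O(m^{-1})$ estimate depend on $r$ and on the sup-norms of the $f_i$ (as well as on the $C^{\infty}$-data entering the constants of \refE{prod}), but $r$ and the $f_i$ are fixed throughout, so this is harmless. The argument is essentially the standard observation that once one has an asymptotic homomorphism property at the level of binary products, it extends automatically to products of arbitrary finite length.
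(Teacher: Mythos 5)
Your argument is correct and is exactly the routine telescoping induction that the paper has in mind when it states that the proposition ``follows directly'' from \refT{approx}(c); the paper gives no further detail, so your write-up simply makes the implicit step explicit. The use of \refE{norma} to bound $\|T^{(m)}_{f_r}\|$ uniformly in $m$ and the remark about the constants depending only on the fixed data are both appropriate.
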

\noindent
follows directly.
\begin{proposition}
\begin{equation}
\lim_{m\to\infty}
||\;[\Tfm,\Tgm]\;||\quad = \quad 0\ .
\end{equation}
\end{proposition}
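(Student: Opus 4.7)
The plan is to derive this directly from \refT{approx}. The key observation is that part (b) of that theorem already controls the commutator $[\Tfm,\Tgm]$ up to a factor of $m\i$ and an error of order $1/m$, so dividing by $m$ kills both the leading term and the error.

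More concretely, I would first invoke \refE{dirac} from \refT{approx}(b) to write
\begin{equation*}
\|m\i[\Tfm,\Tgm] - \Tfgm\| \le \frac{C_1}{m}
\end{equation*}
for some constant $C_1 > 0$ and all sufficiently large $m$. By the triangle inequality this yields
\begin{equation*}
\|m[\Tfm,\Tgm]\| \le \|\Tfgm\| + \frac{C_1}{m}.
\end{equation*}
Next, I would apply the upper bound in \refE{norma} of \refT{approx}(a) to the function $\{f,g\} \in \Cim$, giving $\|\Tfgm\| \le |\{f,g\}|_\infty$. Combining these,
\begin{equation*}
\|[\Tfm,\Tgm]\| \le \frac{1}{m}\Bigl(|\{f,g\}|_\infty + \frac{C_1}{m}\Bigr) = O\!\left(\frac{1}{m}\right),
\end{equation*}
which tends to $0$ as $m\to\infty$.

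There is essentially no obstacle here, since the result is a direct corollary of the quantitative estimates already stated in \refT{approx}; the only point to note is that $\{f,g\}$ is a smooth function on $M$ (being the Poisson bracket of two smooth functions on a K\"ahler manifold), so part (a) of the theorem genuinely applies and yields the boundedness of $\|\Tfgm\|$ uniformly in $m$. Alternatively, one could bypass part (a) entirely and just use that $\Tfgm$ is defined as a projection composed with multiplication by a bounded function, which gives $\|\Tfgm\| \le |\{f,g\}|_\infty$ immediately.
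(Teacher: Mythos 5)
Your proof is correct and follows essentially the same route as the paper: both apply the (reverse) triangle inequality to the estimate of Theorem~\ref{T:approx}(b) and then use the boundedness of $\|\Tfgm\|$ (from part (a), or directly from $\|\Tfgm\|\le|\{f,g\}|_\infty$) to conclude that $\|[\Tfm,\Tgm]\|=O(1/m)$. No gaps.
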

\begin{proof}
Using the left side of the triangle inequality,
from  \refT{approx} (b) it follows that
$$
\left| m||\,[\Tfm,\Tgm]\,||-||\Tfgm||\right|\le 
||m\i[\Tfm,\Tgm]-\Tfgm||\quad=\quad O(\frac 1m)\ .
$$
By part (a) of the theorem 
$ ||\Tfgm||\to |\{f,g\}|_\infty$, 
and it stays finite. Hence 
$\ ||\,[\Tfm,\Tgm]\,||\ $ has to be be a zero sequence.
\end{proof}
\begin{proposition}\label{P:sur}
The Toeplitz map 
$$
\Cim\to\eghm,\qquad  f\to\Tfm,
$$
is surjective.
\end{proposition}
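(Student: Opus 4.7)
The plan is to reduce surjectivity of $T^{(m)}$ to the $\C$-linear independence of a concrete finite family of smooth functions on $M$, and then to establish that independence by a holomorphic/antiholomorphic separation argument.

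First, fix an orthonormal basis $s_1,\dots,s_N$ of the finite-dimensional space $\ghm$ and identify $\eghm$ with $\mathrm{Mat}_N(\C)$ via $A\mapsto(\langle s_i,As_j\rangle)_{i,j}$. Because $s_i\in\ghm$, the projection $\Pim$ is self-adjoint and fixes $s_i$, so for every $f\in\Cim$,
\begin{equation*}
\langle s_i,\Tfm s_j\rangle=\langle s_i,f\cdot s_j\rangle=\int_M f\,\hm(s_i,s_j)\,\Omega.
\end{equation*}
The image of $T^{(m)}\colon\Cim\to\eghm$ lies in a finite-dimensional codomain, so by linear duality it equals all of $\eghm$ if and only if no nonzero matrix $(c_{ij})$ satisfies $\sum_{i,j}c_{ij}\langle s_i,\Tfm s_j\rangle=0$ for every $f$. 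Swapping sum and integral turns this into the requirement that the $N^{2}$ smooth functions $\hm(s_i,s_j)$ on $M$ be $\C$-linearly independent.

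The main obstacle is precisely this linear independence, because it requires separating a holomorphic factor from an antiholomorphic one. Suppose $\sum_{i,j}c_{ij}\hm(s_i,s_j)\equiv 0$. In a coordinate chart $U$ with local holomorphic frame $e^{\otimes m}$ of $L^m$, writing $s_j=\hat s_j\cdot e^{\otimes m}$ turns this into $\hh^{m}\sum_{i,j}c_{ij}\overline{\hat s_i(z)}\hat s_j(z)=0$; since $\hh>0$ we may drop the metric factor. Setting $g_i(w):=\overline{\hat s_i(\bar w)}$, holomorphic in $w$, the function
\begin{equation*}
H(z,w):=\sum_{i,j}c_{ij}\,g_i(w)\,\hat s_j(z)
\end{equation*}
is jointly holomorphic on a bi-polydisk and vanishes on the totally real locus $\{w=\bar z\}$, hence vanishes identically on that bi-polydisk. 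Fixing $z$ and invoking $\C$-linear independence of the $g_i$ --- which is equivalent to that of the $\hat s_i$ and follows from the basis property of $s_1,\dots,s_N$ combined with the identity principle for holomorphic sections on the connected manifold $M$ --- yields $\sum_j c_{ij}\hat s_j(z)\equiv 0$ on $U$ for every $i$, and a second application of the same linear independence delivers $c_{ij}=0$, completing the proof.
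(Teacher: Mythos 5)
Your proof is correct, and it is essentially the argument behind the reference the paper gives for this statement ([BMS, Prop.~4.2], which the review cites without reproducing): reduce, via trace/Hilbert--Schmidt duality, to the $\C$-linear independence of the functions $h^{(m)}(s_i,s_j)$ on $M$, and prove that independence by the sesquiholomorphic extension argument (your $H(z,w)$ vanishing on the totally real locus $\{w=\bar z\}$, hence identically). The one hypothesis worth flagging explicitly is that concluding $c_{ij}=0$ from vanishing on a single chart $U$ needs the connectedness of $M$ together with the identity principle for holomorphic sections, which you do invoke.
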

\noindent
For a proof see \cite[Prop. 4.2]{BMS}.
\newline
This proposition says that for a fixed $m$ every operator 
$A\in\eghm$ is  the Toeplitz operator of a function
$f_m$. In the language of Berezin's co- and contravariant 
symbols $f_m$ will be the contravariant symbol of $A$.
We will discuss this in \refSS{symbols}. 
\begin{proposition}
For all $f\in\Cim$
$$
{T_f^{(m)}}^*=T_{\bar f}^{(m)}\ .$$
In particular, for real valued functions $f$ the associated 
Toeplitz operator is selfadjoint.
\end{proposition}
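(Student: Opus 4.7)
The plan is to prove the adjoint identity directly from the definition of $T_f^{(m)} = \Pi^{(m)}(f\,\cdot)$ by exploiting two elementary facts: the orthogonal projection $\Pi^{(m)}$ is self-adjoint on $\Lpm$, and the pointwise-multiplication operator $M_f$ on $\Lpm$ has adjoint $M_{\bar f}$. Combined with the observation that elements of $\ghm$ are fixed by $\Pi^{(m)}$, this will give the result in a few lines.

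First, I would verify that $M_f^\ast = M_{\bar f}$ on $\Lpm$. Since the hermitian metric $h^{(m)}$ is, by our convention, anti-linear in the first argument and linear in the second, we have pointwise
\begin{equation}
h^{(m)}(f\varphi,\psi)=\bar f\,h^{(m)}(\varphi,\psi)=h^{(m)}(\varphi,\bar f\psi),
\end{equation}
and integrating against the Liouville form $\Omega$ yields $\langle f\varphi,\psi\rangle = \langle \varphi,\bar f\psi\rangle$ for all $\varphi,\psi\in\Lpm$. Next, $\Pi^{(m)}$ is the orthogonal projection onto the closed subspace $\ghm\subset\Lpm$, hence $(\Pi^{(m)})^\ast=\Pi^{(m)}$.

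Now take $s,t\in\ghm$. Using $\Pi^{(m)}t=t$, self-adjointness of $\Pi^{(m)}$, the multiplication-adjoint identity, and $\Pi^{(m)}s=s$ in turn, I would compute
\begin{equation}
\langle \Tfm s,t\rangle = \langle \Pi^{(m)}(fs),t\rangle = \langle fs,\Pi^{(m)}t\rangle = \langle fs,t\rangle = \langle s,\bar f t\rangle = \langle \Pi^{(m)}s,\bar f t\rangle = \langle s,\Pi^{(m)}(\bar f t)\rangle = \langle s,T_{\bar f}^{(m)} t\rangle.
\end{equation}
Since this holds for all $s,t\in\ghm$ and $\ghm$ is finite-dimensional, this is exactly the statement $(\Tfm)^\ast = T_{\bar f}^{(m)}$. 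The self-adjointness for real-valued $f$ is then the immediate specialization $\bar f=f$.

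There is essentially no obstacle here; the only point requiring a moment of care is that the sign convention on the hermitian metric (anti-linear in the first slot, as fixed in the definition of the scalar product \refE{skp}) is the one that makes $M_f^\ast=M_{\bar f}$ rather than $M_f^\ast=M_f$, so one should state the convention explicitly before the computation. No further analytic input, such as the asymptotic estimates of \refT{approx}, is needed.
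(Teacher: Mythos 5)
Your proof is correct and is essentially the paper's own argument: both rest on the self-adjointness of the orthogonal projection $\Pi^{(m)}$, the identity $\langle f\varphi,\psi\rangle=\langle\varphi,\bar f\psi\rangle$ coming from the convention on $h^{(m)}$, and the fact that holomorphic sections are fixed by $\Pi^{(m)}$. You merely spell out explicitly the steps that the paper's one-line chain of equalities leaves implicit.
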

\begin{proof}
Take $s,t\in\ghm$ then 
$$
\scp {s}{\Tfm t}=
\scp {s}{\Pim (f\cdot t)}=
\scp {s}{f\cdot t}=
\scp {\bar f\cdot s}{t}=
\scp {T^{(m)}_{\bar f}s}{t}.
$$
\end{proof}
\noindent
The opposite of the last statement of the above proposition
is also true in the following sense.
\begin{proposition}\label{P:self}
Let $A\in\eghm$ be a selfadjoint operator then there exists a real valued
function $f$, such that $A=\Tfm$.
\end{proposition}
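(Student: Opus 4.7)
The plan is to combine the preceding two propositions: surjectivity of the Toeplitz map (Proposition~\ref{P:sur}) gives us \emph{some} preimage under $T^{(m)}$, and the adjoint formula $(T_f^{(m)})^* = T_{\bar f}^{(m)}$ lets us convert self-adjointness of the operator into a reality statement about the function, after symmetrizing.

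Concretely, I would proceed as follows. First, apply Proposition~\ref{P:sur} to obtain some (a priori complex-valued) $g \in C^\infty(M)$ such that $A = T_g^{(m)}$. Next, define
\begin{equation*}
f := \tfrac{1}{2}(g + \bar g) = \re g,
\end{equation*}
which is a real-valued smooth function on $M$. I would then verify that $T_f^{(m)} = A$ by a short linearity computation: the Berezin--Toeplitz map $g \mapsto T_g^{(m)} = \Pi^{(m)}(g\,\cdot\,)$ is $\C$-linear in $g$, so
\begin{equation*}
T_f^{(m)} \;=\; \tfrac{1}{2}\bigl(T_g^{(m)} + T_{\bar g}^{(m)}\bigr) \;=\; \tfrac{1}{2}\bigl(T_g^{(m)} + (T_g^{(m)})^*\bigr) \;=\; \tfrac{1}{2}(A + A^*) \;=\; A,
\end{equation*}
where the second equality uses the adjoint identity from the previous proposition and the last equality uses the hypothesis $A^* = A$. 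This yields the claim.

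There is essentially no obstacle here, since both of the key ingredients (surjectivity and $(T_g^{(m)})^* = T_{\bar g}^{(m)}$) have just been established; the only point worth emphasizing in the write-up is that the $\C$-linearity of $f \mapsto T_f^{(m)}$ is built into the definition (multiplication by $f$ is linear in $f$ and $\Pi^{(m)}$ is linear), so decomposing $g = \re g + \i\,\im g$ and symmetrizing is legitimate. One could alternatively phrase the argument as: writing $g = f_1 + \i f_2$ with $f_j$ real, self-adjointness forces $T_{f_2}^{(m)} = 0$, hence $A = T_{f_1}^{(m)}$; this gives the same conclusion but is marginally less direct.
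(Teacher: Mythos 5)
Your argument is correct and is essentially the paper's own proof: the paper also takes a preimage $g$ via Proposition~\ref{P:sur}, writes $g=f_0+\i f_1$ with $f_0,f_1$ real, and uses $A=A^*=T^{(m)}_{\bar g}$ to conclude $T^{(m)}_{f_1}=0$ and $A=T^{(m)}_{f_0}$ --- exactly the ``alternative phrasing'' you mention at the end, and equivalent to your symmetrization $f=\tfrac12(g+\bar g)$.
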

\begin{proof}
By the surjectivity of the Toeplitz map $A=\Tfm$ with a complex
valued function $f=f_0+\i f_1$ with real functions $f_0$ and $f_1$.
As $\Tfm=A=A^*=T^{(m)}_{\bar f}$ it follows $T_{f-\bar f}=0$ and hence
$T^{(m)}_{f_1}=0$.
From this we conclude $A=\Tfm=T^{(m)}_{f_1}$.
\end{proof}
We like to stress the fact that the Toeplitz map is never
injective on a fixed level $m$.
Only if $\ ||T^{(m)}_{f-g}|| \to 0\ $ for $m\to 0$ we can conclude that
$f=g$.
\begin{proposition}
Let $f\in\Cim$ and  $n=\dim_{\C}M$. Denote the 
trace on $\End(\ghm)$ by $\Tr^{(m)}$  then
\begin{equation}\label{E:trop}
\Tr^{(m)}\,(T^{(m)}_{f})
        =m^n\left(\frac 1{\volu (\P^{n}(\C))}
\int_M f\, \Omega +O(m^{-1})\right)\ .
\end{equation}
\end{proposition}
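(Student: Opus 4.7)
The plan is to write $\Tr^{(m)}(\Tfm)$ as an integral of $f$ against the diagonal of the Bergman reproducing kernel of $\ghm$, and then to invoke the leading-order on-diagonal asymptotics of that kernel. Fix any orthonormal basis $\{s_\alpha^{(m)}\}_{\alpha=1}^{N_m}$ of $\ghm$ with respect to the scalar product \refE{skp}. Since $\Pim$ is the orthogonal projection onto $\ghm$ and each $s_\alpha^{(m)}$ already lies in $\ghm$,
\begin{equation*}
\Tr^{(m)}(\Tfm)=\sum_{\alpha=1}^{N_m}\scp{s_\alpha^{(m)}}{\Pim(f\cdot s_\alpha^{(m)})}=\sum_{\alpha=1}^{N_m}\scp{s_\alpha^{(m)}}{f\cdot s_\alpha^{(m)}}=\int_M f(x)\,B_m(x)\,\Omega(x),
\end{equation*}
where $B_m(x):=\sum_{\alpha}\hm(s_\alpha^{(m)}(x),s_\alpha^{(m)}(x))$ is the Bergman kernel restricted to the diagonal. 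The function $B_m$ is a well-defined smooth function on $M$, independent of the chosen basis.

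Next, I would invoke the on-diagonal Bergman kernel asymptotics of Tian, Bouche, Catlin, Zelditch, and Lu: there exists a positive constant $b_0$ (depending only on the normalization conventions for $\w$ and for the curvature of $L$ fixed in Section~\ref{S:set}) and smooth functions $b_k$ on $M$ such that, uniformly in $x\in M$,
\begin{equation*}
B_m(x)=b_0\,m^n+b_1(x)\,m^{n-1}+\cdots\,.
\end{equation*}
Only the leading term $B_m(x)=b_0\, m^n+O(m^{n-1})$, uniform in $x$, is needed. Substituting this into the integral representation of the trace, pulling out $m^n$, and bounding the remainder by $|f|_\infty\,\volu(M)\,O(m^{-1})$, one obtains at once
\begin{equation*}
\Tr^{(m)}(\Tfm)=m^n\left(b_0\int_M f\,\Omega+O(m^{-1})\right).
\end{equation*}

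It remains to identify $b_0=1/\volu(\pnc[n])$. The cleanest way is to evaluate the formula in the model case $M=\pnc[n]$ with the Fubini-Study form and $L$ the hyperplane section bundle: there the Bergman kernel is $\PU(n+1)$-invariant, hence constant and equal to $\dim\ghm/\volu(\pnc[n])$, and a comparison of leading terms in $m$ pins down $b_0$. Alternatively one can extract $b_0$ from $\dim\ghm\sim m^n\,b_0\,\volu(M)$ via Hirzebruch-Riemann-Roch together with the quantum condition $\mathrm{curv}_{L,\nabla}=-\i\w$. The main obstacle is the uniform on-diagonal Bergman asymptotic itself: this is the central analytic input, established either by Tian's peak section method or by the microlocal Szeg\H{o} kernel calculus of Boutet-de-Monvel and Guillemin \cite{BGTo} applied to the unit circle bundle of $L^*$. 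Once this input is granted, the rest of the argument is a short integration and a matching of constants.
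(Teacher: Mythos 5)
Your argument is correct. Since the review gives no proof of this proposition and defers to \cite{BMS} and \cite{Schldef}, the comparison is with those sources and with the machinery the review itself develops later. Your first step is common to all treatments: $\Tr^{(m)}(\Tfm)=\int_M f\,B_m\,\Omega$ with $B_m$ the on-diagonal Bergman kernel; note that by \refP{epssec} your $B_m$ is exactly Rawnsley's function $\epsm$, which for $\alpha$ in the unit circle bundle coincides with $u_m(x)=\Bm(\alpha,\alpha)$ of \refE{um}, so the expansion you invoke is precisely the Zelditch expansion quoted in \refSS{ident}. Where you genuinely differ from \cite{BMS} is in the source of the analytic input: \cite{BMS} predates \cite{Tian} being sharpened by \cite{Zel} and obtains the needed leading-order kernel asymptotics from the Boutet de Monvel--Guillemin symbol calculus \cite{BGTo} on the circle bundle of \refS{global} (essentially your second suggested route), whereas quoting the Tian--Catlin--Zelditch theorem makes the argument shorter at the price of a heavier black box. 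Two points deserve to be made explicit. First, the universality of $b_0$ (its independence of the point and of $(M,L,\w)$) is exactly what licenses evaluating it on the model $\pnc[n]$, so it should be stated as part of the input. Second, when matching constants: in the paper's conventions one finds $b_0=(2\pi)^{-n}$ (e.g.\ from $\dim\ghm\sim m^n\volu(M)/(2\pi)^n$ by Riemann--Roch), which agrees with $1/\volu(\pnc[n])$ only if $\volu(\pnc[n])$ is read as the symplectic volume $\int\w_{FS}^n=(2\pi)^n$ rather than the Liouville volume $(2\pi)^n/n!$ (test $f=1$ on $\P^2(\C)$, where $\dim\ghm\sim m^2/2$); this is a normalization quirk of the statement rather than a gap in your proof, and your insistence on pinning $b_0$ down by an explicit model computation is the right safeguard against exactly this kind of slip.
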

\noindent
See \cite{BMS}, resp. \cite{Schldef} for a detailed proof.

\subsection{Strict quantization}
The asymptotic results of \refT{approx} says that 
the BT operator quantization is a strict quantization in the sense
of Rieffel 
\cite{Riefque}
as formulated in the book by Landsman \cite{Land}.
We take as base space $X=\{0\}\cup \{1/m\mid m\in\N\}$, 
with its induced topology coming from $\R$.
Note that $\{0\}$ is an accumulation point of the set
 $\{1/m\mid m\in\N\}$.
As $C^*$ algebras above the points $\{1/m\}$ we take the algebras
$\eghm$ and above  $\{0\}$ the algebra $\Cim$.
For $f\in\Cim$ we assign $0\mapsto f$, and $1/m\mapsto \Tfm$.
Now the property (a) in \refT{approx} is called in  \cite{Land}
Rieffel's condition, (b) Dirac's condition, and (c) von Neumann's
condition.
Completeness is true by Propositions \ref{P:sur} and \ref{P:self}.

 This definition  is closely related to the 
notion of continuous fields of $C^*$-algebras, see \cite{Land}.

\subsection{Relation to geometric quantization}
There exists another quantum operator in the geometric setting,
the operator of geometric quantization introduced by Kostant and Souriau.
In a first step the prequantum operator associated to the
bundle $L^m$ for the function $f\in\Cim$ is defined as
\begin{equation}
P_f^{(m)}:=\nabla_{X_f^{(m)}}^{(m)}+\i f\cdot  id.
\end{equation}
Here $\nabla^{(m)}$ is the connection in $L^m$, and  
$X_f^{(m)}$ the Hamiltonian vector field of $f$ with respect to the
K\"ahler form $\w^{(m)}=m\cdot \w$, i.e.
$m\w(X_f^{(m)},.)=df(.)$.
This operator $P_f^{(m)}$ acts on the space of differentiable global sections
of the line bundle $L^m$. 
The sections depend at every point on  $2n$ local coordinates and
one has to restrict the space to sections covariantly constant
along the excessive dimensions.
In technical terms,
one chooses a {\it polarization}. In general such a polarization is not 
unique. But in our complex situation there is 
canonical one 
by only taking the holomorphic sections.
This polarization is called {\it K\"ahler polarization}.
The operator of geometric quantization is then defined by
\begin{equation}\label{E:Geq}
Q_f^{(m)}:=\Pim P_f^{(m)}.
\end{equation}
The Toeplitz operator and the operator of geometric quantization
(with respect to the K\"ahler polarization)
are related by
\begin{proposition}\label{P:tuyn}
(Tuynman Lemma) Let $M$ be a compact quantizable K\"ahler 
manifold then 
\begin{equation}\label{E:tuyn}
Q_f^{(m)}=\i\cdot T_{f-\frac 1{2m}\Delta f}^{(m)},
\end{equation}
where $\Delta$ is the Laplacian with respect to the K\"ahler metric given by
$\omega$.
\end{proposition}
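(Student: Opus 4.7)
The plan is to use $T^{(m)}_g = \Pim(g\cdot)$ to recast \refE{tuyn}: after multiplying the formula for $Q_f^{(m)}$ by $\mathrm{i}$, the $\mathrm{i}f$-terms cancel, so the claim is equivalent to
\begin{equation*}
\Pim\nabla^{(m)}_{X_f^{(m)}}s \;=\; -\frac{\mathrm{i}}{2m}\,\Pim\bigl((\Delta f)s\bigr)\qquad\text{for every } s\in\ghm.
\end{equation*}
Since both sides lie in $\ghm$ and $\Pim$ is self-adjoint, it suffices to show $\langle t,\nabla^{(m)}_{X_f^{(m)}}s\rangle = -\frac{\mathrm{i}}{2m}\langle t,(\Delta f)s\rangle$ for every $t\in\ghm$.

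The key manoeuvre is to transfer the derivative off the section. Decompose $X_f^{(m)} = X_f^{(m),1,0}+X_f^{(m),0,1}$ into its complex types. Because the Chern connection on the holomorphic line bundle $L^m$ satisfies $(\nabla^{(m)})^{0,1}=\bar\partial$, and $\bar\partial$ annihilates holomorphic sections, both $\nabla^{(m)}_{X_f^{(m),0,1}}s = 0$ and $\nabla^{(m)}_{X_f^{(m),0,1}}t = 0$; in particular $\nabla^{(m)}_{X_f^{(m)}}s = \nabla^{(m)}_{X_f^{(m),1,0}}s$. Since $X_f^{(m)}$ is real one has $\overline{X_f^{(m),1,0}}=X_f^{(m),0,1}$, so the $\mathbb{C}$-linear extension of metric compatibility,
\begin{equation*}
X\,h^{(m)}(t,s) \;=\; h^{(m)}\bigl(\nabla^{(m)}_{\overline{X}}t,s\bigr)+h^{(m)}\bigl(t,\nabla^{(m)}_X s\bigr),
\end{equation*}
applied with $X = X_f^{(m),1,0}$ collapses to $h^{(m)}(t,\nabla^{(m)}_{X_f^{(m)}}s) = X_f^{(m),1,0}\bigl(h^{(m)}(t,s)\bigr)$. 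Integrating over the closed manifold $M$ and invoking Stokes' theorem in the form $\int_M V(\psi)\,\Omega = -\int_M \psi\cdot\mathrm{div}_\Omega V\cdot\Omega$ then reduces the required identity to the computation of $\mathrm{div}_\Omega\bigl(X_f^{(m),1,0}\bigr)$.

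The divergence is a purely K\"ahler-geometric quantity. The defining relation $\w(X_f,\cdot)=df$ gives, in local holomorphic coordinates, $X_f^{1,0} = -\mathrm{i}\,\mathrm{grad}^{1,0}f$ with $\mathrm{grad}^{1,0}f = g^{\bar j i}\bar\partial_j f\,\partial_i$ and $\iota_{\mathrm{grad}^{1,0}f}\,\w = \mathrm{i}\,\bar\partial f$. Cartan's formula combined with $d\w = 0$ (the K\"ahler condition) then yields
\begin{equation*}
L_{\mathrm{grad}^{1,0}f}\,\w^n \;=\; d\bigl(\iota_{\mathrm{grad}^{1,0}f}\w^n\bigr) \;=\; n\,\mathrm{i}\,\partial\bar\partial f\wedge\w^{n-1}.
\end{equation*}
The pointwise identity $n\,\mathrm{i}\,\partial\bar\partial f\wedge\w^{n-1} = -\tfrac{1}{2}(\Delta f)\,\w^n$, verified in a unitary frame at a point and which fixes the sign convention of $\Delta$, gives $\mathrm{div}_\Omega(\mathrm{grad}^{1,0}f) = -\tfrac{1}{2}\Delta f$ and hence $\mathrm{div}_\Omega\bigl(X_f^{(m),1,0}\bigr) = \frac{\mathrm{i}}{2m}\Delta f$. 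Substituting back into the integration-by-parts identity produces exactly $\langle t,\nabla^{(m)}_{X_f^{(m)}}s\rangle = -\frac{\mathrm{i}}{2m}\langle t,(\Delta f)s\rangle$, as required.

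The main obstacle is the K\"ahler identity relating $\mathrm{i}\,\partial\bar\partial f\wedge\w^{n-1}$ to $(\Delta f)\,\w^n$: it is where the characteristic factor $\tfrac{1}{2m}$ emerges and where the closedness $d\w=0$ is used decisively. Everything else is a systematic application of two features of the K\"ahler setting, namely the Chern connection's property $\nabla^{0,1}=\bar\partial$ (which allows the connection derivative on $s$ to be converted into an action on the scalar function $h^{(m)}(t,s)$) and Stokes' theorem on the compact $M$ (which turns that scalar derivative into multiplication by $\Delta f$).
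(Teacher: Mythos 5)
Your argument is correct: reducing \refE{tuyn} to $\Pi^{(m)}\nabla^{(m)}_{X_f^{(m)}}s=-\tfrac{\i}{2m}\Pi^{(m)}((\Delta f)s)$, killing the $(0,1)$-derivatives via $(\nabla^{(m)})^{0,1}=\bar\partial$ on holomorphic $s$ and $t$, and then integrating by parts against the divergence of $\mathrm{grad}^{1,0}f$ is exactly the coordinate-independent proof the paper points to in \cite{Tuyn} and \cite{BHSS} (the paper itself gives no proof, only these references). The only caveats are minor and you flag them yourself: the sign of $\Delta$ is fixed by your pointwise identity (the paper leaves the convention unstated), and the reality of $X_f^{(m)}$ is only needed for real $f$, to which the general case reduces by $\C$-linearity of both sides in $f$.
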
 
\noindent
For the proof see  \cite{Tuyn}, and  \cite{BHSS}
for a coordinate independent proof.
\newline
In particular the $Q_f^{(m)}$ and the $\Tfm$ have the same asymptotic 
behaviour. We obtain for $Q_f^{(m)}$ similar results as in 
\refT{approx}. For details see \cite{Schlhab}.
It should be noted that for \refE{tuyn} the compactness of $M$ is essential.

\subsection{$L_\alpha$ approximation}
In \cite{BHSS}  the notion of $L_{\alpha}$, resp.
$gl(N)$, resp. $su(N)$ quasi-limit were introduced.
It was conjectured in \cite{BHSS} that for every 
compact quantizable K\"ahler manifold the Poisson algebra of functions 
is a $gl(N)$ quasi-limit. 
In fact, the conjecture follows from the \refT{approx}, see
\cite{BMS} and \cite{Schlhab} for details.
\subsection{The noncompact situation}
\label{SS:noncompact}
Berezin-Toeplitz operators can be introduced  for non-compact
K\"ahler manifolds.
In this case the $L^2$ spaces are the space of bounded sections
and for the subspaces of holomorphic sections one can only consider
the
bounded holomorphic sections. Unfortunately,
in this context the proofs of \refT{approx} 
do not work. One has to study examples or classes of examples 
case by case  whether the corresponding
properties are correct.

In the following we give a very incomplete list of
references.
Berezin himself studied bounded complex-symmetric domains
 \cite{Beress}.
In this case the manifold is an open domain in 
$\C^n$.
Instead of sections one studies functions which are integrable
with respect to a suitable measure depending on
$\hbar$. Then  $1/\hbar$ 
corresponds to the tensor power of our bundle.
Such Toeplitz operators were studied extensively by  Upmeier in 
a series of works 
\cite{Upa,Upb,Upc,Upd}.
See also the book of Upmeier
\cite{UpB}.
For $\C^n$ see Berger and Coburn \cite{BeCob}, \cite{Cob}.
Klimek and Lesniewski \cite{KlLeqr}  studied  
the Berezin-Toeplitz quantization
on the unit disc. Using automorphic forms and the universal
covering they obtain results  for Riemann surfaces of genus 
$g\ge 2$.
The names of  Borthwick, Klimek, Lesniewski, Rinaldi, and  Upmeier
should be mentioned in the context of BT quantization for 
Cartan domains and super Hermitian spaces.

A quite different approach to Berezin-Toeplitz quantization is based
on  the asymptotic expansion of the Bergman kernel outside 
the diagonal. This was also used  by the author together 
with Karabegov \cite{KS} for the compact K\"ahler case.
See \refS{btrans} for some details.
Engli\v s \cite{Engbk} showed similar results for bounded 
pseudo-convex domains in $\C^N$.
Ma and Marinescu \cite{MaMar,MaMar1} developed a 
theory of Bergman kernels
for the symplectic case, which yields also results on the 
Berezin-Toeplitz operators  for certain  non-compact K\"ahler manifolds
and even orbifolds.

\section{Berezin-Toeplitz deformation quantization}
\label{S:btstar}
There is another approach to quantization. Instead of assigning
noncommutative operators to commuting functions one
might think about ``deforming'' the pointwise commutative 
multiplication of functions into a non-commutative product. 
It is required to remain associative, the commutator of
two elements 
should relate to the Poisson bracket of the elements, and it should
reduce in the ``classical limit'' to the commutative 
situation.

It turns out that such a deformation which is valid for all 
differentiable functions cannot exist. A way out is to
enlarge the algebra of functions by considering formal 
power series over them and to deform the product inside this
bigger algebra.
A first systematic treatment and  applications in physics of 
this idea were given 1978 
by
Bayen, Flato, Fronsdal, Lichnerowicz, and Sternheimer
 \cite{BFFLS}. There the notion
of {\it deformation quantization} and {\it star products} were
introduced.
Earlier versions of these concepts were around
due to  Berezin \cite{Berequ}, Moyal \cite{Moy}, and 
Weyl \cite{Weyl}.
For a presentation of the history see \cite{Stern}.

We will show that for compact K\"ahler manifolds $M$, 
there is a natural star product.

\subsection{Definition of star products}
We start with a Poisson manifold $(M,\{.,.\})$, i.e. a differentiable
manifold with a Poisson bracket for the function such that
$(\Cim,\cdot, \{.,.\})$ is a Poisson algebra.
Let $\mathcal{A}=\Cim[[\nu]]$ be the algebra of formal power
 series in the 
variable $\nu$ over the algebra $\Cim$.
\begin{definition}\label{D:star}
A product $\star$
 on $\mathcal {A}$ is 
called a (formal) star product for $M$ (or for $\Cim$) if it is an
associative $\C[[\nu]]$-linear product which is $\nu$-adically continuous 
such that
\begin{enumerate}
\item
\qquad $\mathcal{ A}/\nu\mathcal {A}\cong\Cim$, i.e.\quad $f\star g
 \bmod \nu=f\cdot g$,
\item
\qquad $\dfrac 1\nu(f\star g-g\star f)\bmod \nu = -\i \{f,g\}$,
\end{enumerate}
where $f,g\in\Cim$.
\end{definition}
Alternatively we can  
write 
\begin{equation}
\label{E:cif}
 f\star g=\sum\limits_{j=0}^\infty C_j(f,g)\nu^j\ ,
\end{equation}
with
$ C_j(f,g)\in\Cim$ such that the  $C_j$ are bilinear in the entries $f$ and $g$.
The conditions (1) and (2)  can 
be reformulated as 
\begin{equation}
\label{E:cifa}
C_0(f,g)=f\cdot g,\qquad\text{and}\qquad
C_1(f,g)-C_1(g,f)=-\i \{f,g\}\ .
\end{equation}
By the $\nu$-adic continuity \refE{cif} fixes $\star$ on  $\mathcal{A}$.
A {\em (formal) deformation quantization} is given by a {\it (formal) 
star product}.
I will use both terms interchangeable.

There are certain additional conditions 
for a star product which are sometimes useful.
\begin{enumerate}
\item We call it  ``null on constants'', if  $1\star f=f\star 1=f$,
which is equivalent to the fact
that the constant function $1$ will remain the unit
in $\mathcal{A}$.
In terms of the coefficients it can be formulated as
$C_k(f,1)=C_k(1,f)=0$ for $k\ge 1$.
In this review we always assume this to 
be the case for  star products.
\item
We call it selfadjoint if 
$\ \overline{f\star g}=\overline{g}\star\overline{f}$,
where we assume $\bar \nu=\nu$.
\item
We call it local if 
$$\mathrm{supp}\, C_j(f,g)\subseteq \mathrm{supp}\, f\cap
\mathrm{supp}\, g,
\qquad 
\forall f,g\in\Cim.
$$
{}From the locality property it follows that 
the $C_j$ are bidifferential operators and that 
the global star product defines
for every open subset $U$ of $M$ a star product for 
the Poisson algebra $C^\infty(U)$.
Such local star products are also called
{\it differential star products}.
\end{enumerate}

\subsection{Existence of star products}
In the usual setting of deformation theory there always exists 
a trivial deformation. This is not the case here, as the trivial
deformation of $\Cim$ to  $\mathcal{A}$, which is nothing else as extending
the point-wise product to the power series, is not
allowed as it does not fulfil Condition (2) in \refD{star}
(at least not if the Poisson bracket is non-trivial).
In fact the existence problem is highly non-trivial.
In the symplectic case different existence proofs, from different perspectives,
were given by DeWilde-Lecomte \cite{DeWiLe},
Omori-Maeda-Yoshioka \cite{OMY},
and 
Fedosov \cite{Fed}.
The general Poisson case was settled by Kontsevich
\cite{Kont}.

\subsection{Equivalence and classification of star products}
\begin{definition}
Given a Poisson manifold $(M,\{.,.\})$. Two star products
$\star$ and $\star'$ associated to the Poisson structure $\{.,.\}$ 
are called equivalent if and only if  there exists   a formal series of 
linear operators
\begin{equation}
B=\sum_{i=0}^\infty B_i\nu^i,\qquad
B_i:\Cim\to\Cim,
\end{equation}
with $\ B_0=id\ $ such that
\begin{equation}
B(f)\star' B(g)=B(f\star g).
\end{equation}
\end{definition}
For local star products 
in the general Poisson setting
there are complete classification results.
Here I will only consider the symplectic case.

To each local star product $\star$ its {\it Fedosov-Deligne class} 
\begin{equation}
cl(\star)\in \frac {1}{\i \nu}[\w]+ H^2_{dR}(M)[[\nu]]
\end{equation}
can be assigned. Here  $H^2_{dR}(M)$ denotes the 2nd deRham
cohomology class of closed 2-forms modulo exact forms
and  $H^2_{dR}(M)[[\nu]]$ the formal power series with such classes
as coefficients.
Such formal power series are called 
{\it formal deRham classes}.
In general we will use $[\a]$ for the cohomology 
class of a form $\a$.

This assignment gives a 1:1 correspondence between the 
formal deRham classes and the equivalence classes of star 
products.

For contractible manifolds  we have $ H^2_{dR}(M)=0$ and hence
there is up to equivalence exactly one local star product.
This yields that locally all local star products of a manifold are
equivalent to a certain fixed one, which is called the Moyal product.
For these and related classification results see \cite{Delstar},
\cite{GuRa}, \cite{BeCaGu}, \cite{NeTs}.

\subsection{Star products with separation of variables}
For our compact K\"ahler manifolds we will have many 
different and even non-equivalent star products. The question is:
is there a star product which is given in a natural way?
The answer will be yes: the Berezin-Toeplitz star product to be
introduced below.
First we consider star products respecting the complex structure in
a certain sense.
\begin{definition}\label{D:sep} (Karabegov \cite{Kar1})
A star product is called {\it star product with separation
of variables} if and only if 
\begin{equation}
f\star h=f\cdot h,\quad\text{and}\quad
h\star g=h\cdot g,
\end{equation}
for every locally defined holomorphic function $g$, antiholomorphic
function $f$, and arbitrary function $h$.
\end{definition}
Recall that  a local star product $\star$ for $M$ defines a star product for
every open subset $U$ of $M$. We have just to take the bidifferential
operators defining   $\star$.
Hence it makes sense to talk about $\star$-multiplying with
local functions.
\begin{proposition}
A local $\star$ product has the separation of variables 
property if and only if in the bidifferential operators
$C_k(.,.)$ for $k\ge 1$ 
in the first argument only derivatives in holomorphic and in the
second argument only derivatives in antiholomorphic directions
appear.
\end{proposition}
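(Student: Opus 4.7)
The plan is to work in local holomorphic coordinates $(z_1,\ldots,z_n)$ and write each $C_k$ in the canonical bidifferential form
\[
C_k(f,h)=\sum_{\alpha,\beta}a_{\alpha,\beta}(z,\bar z)\,\partial^\alpha f\cdot\partial^\beta h,
\]
splitting each multi-index as $\alpha=(\alpha',\alpha'')$ into its holomorphic and antiholomorphic parts, and similarly for $\beta$. The claim to prove is that separation of variables is equivalent to $a_{\alpha,\beta}=0$ whenever $\alpha''\ne 0$ or $\beta'\ne 0$, so that only terms with purely holomorphic $\alpha$ and purely antiholomorphic $\beta$ survive in each $C_k$ with $k\ge 1$.

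The ``if'' direction is essentially a substitution: if the restricted form holds then for any locally antiholomorphic $f$ every derivative $\partial^{\alpha'}f$ with $|\alpha'|\ge 1$ vanishes, so $C_k(f,h)=0$ for $k\ge 1$ and hence $f\star h=C_0(f,h)=f\cdot h$; the case of locally holomorphic $g$ is symmetric.

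For the ``only if'' direction I would exploit the fact that separation of variables in isolation only forces the weaker condition that each summand has $\alpha'\ne 0$ and $\beta''\ne 0$; the extra strength must come from combining separation with the associativity of $\star$. For a locally antiholomorphic $f$ and arbitrary $h_1,h_2$, the separation property gives $f\star h_1=fh_1$ and $f\star(h_1\star h_2)=f\cdot(h_1\star h_2)$, and associativity then yields $(fh_1)\star h_2=f\cdot(h_1\star h_2)$. Extracting the coefficient of $\nu^k$ produces the identity
\[
C_k(fh_1,h_2)=f\cdot C_k(h_1,h_2)
\]
valid for every antiholomorphic $f$ and arbitrary $h_1,h_2$. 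Expanding $\partial^\alpha(fh_1)$ by the Leibniz rule and retaining only the surviving purely antiholomorphic derivatives of $f$, the difference between the two sides reduces to a sum of terms proportional to $\partial^{(0,\gamma'')}f$ with $\gamma''\ne 0$ and $\gamma''\le\alpha''$. Since such antiholomorphic derivative values can be freely prescribed at any chosen point (by taking $f$ to be the complex conjugate of a holomorphic polynomial), and $h_1,h_2$ are arbitrary, the uniqueness of the bidifferential expansion forces $a_{\alpha,\beta}\binom{\alpha}{(0,\gamma'')}=0$; taking $\gamma''=\alpha''$ makes the binomial coefficient nonzero and hence $a_{\alpha,\beta}=0$ whenever $\alpha''\ne 0$. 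A symmetric argument starting from $h_2\star(h_1 g)=(h_2\star h_1)g$ for locally holomorphic $g$ pins down $\beta'=0$.

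The main obstacle is recognizing that separation of variables alone does \emph{not} suffice for the claimed normal form --- a bidifferential operator such as $a(z,\bar z)\,\partial_z f\cdot\partial_z\partial_{\bar z}h$ satisfies $C_k(f,h)=0$ on antiholomorphic first arguments and $C_k(h,g)=0$ on holomorphic second arguments without being of the prescribed form. Associativity is the missing ingredient, and the Leibniz-rule bookkeeping above is the mechanism that converts associativity, together with separation, into the required restriction on the coefficients.
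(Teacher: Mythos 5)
The paper states this proposition without any proof (it is quoted as part of Karabegov's and Bordemann--Waldmann's theory), so there is no argument of the paper to compare yours against; judged on its own, your proof is correct, and it supplies precisely the ingredient that is easy to miss. Your central observation is the right one: the two vanishing conditions $C_k(f,h)=0$ for antiholomorphic $f$ and $C_k(h,g)=0$ for holomorphic $g$ only force each surviving term to carry at least one holomorphic derivative of the first argument and at least one antiholomorphic derivative of the second ($\alpha'\ne 0$, $\beta''\ne 0$); they do not exclude mixed derivatives, and your example $a\,\partial_z f\cdot\partial_z\partial_{\bar z}h$ shows this cleanly. Associativity is indeed what closes the gap, via the module identities $C_k(fh_1,h_2)=f\,C_k(h_1,h_2)$ and $C_k(h_2,h_1g)=C_k(h_2,h_1)\,g$ --- this is exactly the mechanism behind Karabegov's statement that $L_{\bar z_l}$ and $R_{z_k}$ are pointwise multiplications. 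Your Leibniz extraction is in fact a bit simpler than you present it: for fixed $\gamma''\ne 0$, $\delta$ and $\beta$, the coefficient of $\partial^{(0,\gamma'')}f\,\partial^{\delta}h_1\,\partial^{\beta}h_2$ involves the \emph{single} coefficient $a_{\delta+(0,\gamma''),\beta}$ (since $\alpha=\delta+(0,\gamma'')$ is determined) multiplied by a positive binomial coefficient, so no cancellation among different $\alpha$ has to be ruled out and the choice $\gamma''=\alpha''$ is not needed. Two points you should make explicit. First, in the ``if'' direction you tacitly assume $|\alpha'|\ge 1$: a zero-order term in the first slot satisfies the letter of the proposition vacuously but destroys separation; this is harmless only because the paper's standing convention that star products are null on constants kills the $\alpha=0$ and $\beta=0$ terms of $C_k$ for $k\ge 1$ (or, equivalently, because the proposition should be read as requiring at least one derivative in each slot). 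Second, the associativity identity is applied to locally defined (anti)holomorphic functions on a chart $U$, which is legitimate exactly because the star product is assumed local, so that its restriction to $U$ is again an associative star product --- this is the only place where locality, beyond the $C_k$ being bidifferential, enters.
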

In Karabegov's original notation the r\^oles of the holomorphic
and antiholomorphic functions is switched.
Bordemann and Waldmann \cite{BW} called such star products
{\it star products of Wick type}. Both
Karabegov and Bordemann-Waldmann proved that there exists 
for every K\"ahler manifold star products of separation of
variables type.
In \refSS{kara} we will give more details on Karabegov's
construction.
Bordemann and Waldmann modified Fedosov's method \cite{Fed} to
obtain such a star product.
See also Reshetikhin and Takhtajan \cite{ResTak}
for yet another construction.
But I like to point out that in  all these constructions the
result is only a formal star product without any relation 
to an operator calculus, which will be given by 
the Berezin-Toeplitz star product introduced in the
next section.

Another  warning is in order.
The property of being a star product of separation of variables
type will not be kept by equivalence transformations.

\subsection{Berezin-Toeplitz star product}
\begin{theorem}
\label{T:star}
There exists a unique (formal) star product $\star_{BT}$ for $M$
\begin{equation}
f \star_{BT} g:=\sum_{j=0}^\infty \nu^j C_j(f,g),\quad C_j(f,g)\in
C^\infty(M),
\end{equation}
in such a way that for  $f,g\in\Cim$ and for every $N\in\N$  we have
with suitable constants $K_N(f,g)$ for all $m$
\begin{equation}
\label{E:sass}
||T_{f}^{(m)}T_{g}^{(m)}-\sum_{0\le j<N}\left(\frac 1m\right)^j
T_{C_j(f,g)}^{(m)}||\le K_N(f,g) \left(\frac 1m\right)^N\ .
\end{equation}
The star product is null on constants and selfadjoint.
\end{theorem}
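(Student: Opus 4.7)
The plan is to extract the star product from an asymptotic expansion of the product $T_f^{(m)}T_g^{(m)}$ in terms of Toeplitz operators, and then to verify the star product axioms using the formal rigidity of asymptotic expansions in $1/m$.

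First I would establish the basic analytic input: there exist functions $C_j(f,g)\in C^\infty(M)$, depending bilinearly on $f$ and $g$, such that for every $N\in\N$ the estimate \refE{sass} holds with suitable constants. This is the hard part and is where one must invoke the generalised Toeplitz operator calculus of Boutet-de-Monvel--Guillemin (as in \refS{global}), or equivalently the off-diagonal asymptotic expansion of the Bergman kernel. Concretely, one shows that the composition $T_f^{(m)}T_g^{(m)}$ is itself a Toeplitz operator modulo negligible errors, whose contravariant symbol admits a complete asymptotic expansion in powers of $1/m$; the coefficients of that expansion are the $C_j(f,g)$. This is the main obstacle, since from here everything is formal.

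Once the expansion exists, I would define $f\star_{BT}g:=\sum_{j\ge 0}\nu^j C_j(f,g)$ and verify the axioms. Condition $C_0(f,g)=fg$ is exactly \refT{approx}(c), and $C_1(f,g)-C_1(g,f)=-\i\{f,g\}$ is \refT{approx}(b). Associativity follows from the associativity of operator composition: applying \refE{sass} twice to $T_f^{(m)}T_g^{(m)}T_h^{(m)}$ and comparing the two groupings yields that for each $N$
\begin{equation*}
\Bigl\|\sum_{j+k<N}m^{-j-k}T_{C_j(C_k(f,g),h)}^{(m)}-\sum_{j+k<N}m^{-j-k}T_{C_j(f,C_k(g,h))}^{(m)}\Bigr\|=O(m^{-N}),
\end{equation*}
and by \refT{approx}(a) this forces the coefficients to agree order by order in $\nu$. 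Uniqueness of $\star_{BT}$ is the same argument: if a formal series $\sum\nu^j C_j'(f,g)$ satisfies \refE{sass}, then $\|\sum_{j<N}m^{-j}T_{C_j-C_j'}^{(m)}\|=O(m^{-N})$, and iterating \refE{norma} gives $C_j=C_j'$ for all $j$.

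Finally, the two extra properties. For null on constants, note that $T_1^{(m)}=\Pi^{(m)}$ acts as the identity on $\ghm$, so $T_1^{(m)}T_f^{(m)}=T_f^{(m)}=T_f^{(m)}T_1^{(m)}$ for every $m$; feeding this into \refE{sass} and invoking uniqueness gives $C_0(1,f)=C_0(f,1)=f$ and $C_j(1,f)=C_j(f,1)=0$ for $j\ge 1$. For selfadjointness, take adjoints in \refE{sass}: using $(T_f^{(m)})^*=T_{\bar f}^{(m)}$ one gets
\begin{equation*}
\Bigl\|T_{\bar g}^{(m)}T_{\bar f}^{(m)}-\sum_{j<N}m^{-j}T_{\overline{C_j(f,g)}}^{(m)}\Bigr\|=O(m^{-N}),
\end{equation*}
while directly $T_{\bar g}^{(m)}T_{\bar f}^{(m)}\sim\sum m^{-j}T_{C_j(\bar g,\bar f)}^{(m)}$. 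Uniqueness yields $\overline{C_j(f,g)}=C_j(\bar g,\bar f)$, i.e.\ $\overline{f\star_{BT}g}=\bar g\star_{BT}\bar f$ with $\bar\nu=\nu$.
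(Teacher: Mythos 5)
Your proposal is correct and follows essentially the same route as the paper, which itself only sketches the argument: the analytic core (existence of the coefficients $C_j(f,g)$ with the estimate \refE{sass}) is obtained from the Boutet de Monvel--Guillemin symbol calculus for the global Toeplitz operators on the disc bundle as in \refS{global}, and everything else --- $C_0(f,g)=fg$, the Poisson-bracket condition, associativity, uniqueness, null on constants, and selfadjointness --- is deduced formally from \refT{approx} and the order-by-order nondegeneracy supplied by \refE{norma}, exactly as you do. The only point worth flagging is that you defer the one genuinely hard step (the inductive construction of the $C_j$ by repeatedly subtracting $T_{C_j(f,g)}^{(m)}$ and lowering the order of the remainder) to the cited operator calculus, which is also what the paper does by referring to \cite{Schldef} and \cite{Schlhab}.
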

\noindent
This theorem has been proven immediately after 
\cite{BMS} was finished. It has been announced in \cite{Schlbia95},%
\cite{SchlGos}
and the proof was written up in German in  \cite{Schlhab}.
A complete proof published in English 
can be found in \cite{Schldef}.

For simplicity we might write
\begin{equation}
\label{E:expans}
T_{f}^{(m)}\cdot T_{g}^{(m)}
\quad \sim\quad \sum_{j=0}^\infty\left(\frac 1m\right)^j
T_{C_j(f,g)}^{(m)}
\qquad (m\to\infty),
\end{equation}
but 
we will always assume the strong and precise statement of \refE{sass}.
The same is assumed for other asymptotic formulas appearing 
further down in this review.

Next we want to identify this star product. Let $K_M$ be the 
canonical line bundle of $M$, i.e. the $n^{th}$ exterior power of
the holomorphic 1-differentials. The canonical class $\delta$ is
the first Chern class of this line bundle, i.e. 
$\delta:= c_1(K_M)$.
If we take in $K_M$ the fibre metric coming from the 
Liouville form $\Omega$ then this defines a unique
connection and further a unique curvature $(1,1)$-form
$\w_{can}$. 
In our
sign conventions we have  $\delta=[\w_{can}]$.

Together with Karabegov the author showed
\begin{theorem}\label{T:bstarad} \cite{KS}
(a) 
The Berezin-Toeplitz star product is a local star product which 
is of separation of variable type.
\newline
(b) 
Its classifying Deligne-Fedosov class is
\begin{equation}\label{E:starcl}
cl(\star_{BT})=\frac {1}{\i}\left(\frac 1\nu[\w]-\frac {\delta}{2}\right)
\end{equation} 
for the characteristic class of the star product $\star_{BT}$.
\newline
(c) 
The classifying Karabegov form
associated to the  
Berezin-Toeplitz star product is
\begin{equation}\label{E:starform}
-\frac 1\nu\w+\w_{can}.
\end{equation}
\end{theorem}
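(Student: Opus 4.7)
The three claims will be established in the order (a), (c), (b), with Karabegov's classification of separation-of-variables star products as the main tool. For (a), separation of variables follows from two exact (non-asymptotic) identities for Toeplitz operators. If $g$ is locally holomorphic on an open $U \subseteq M$ and $s \in \ghm$, then $g \cdot s$ is again holomorphic on $U$, so $\Pim(gs) = gs$ and $\Tma{h}\Tma{g} = \Tma{hg}$ exactly (on the relevant restriction) for every $h \in \Cim$. Comparing this with the expansion of \refT{star} and peeling off the Toeplitz coefficients one by one via the norm estimate of \refT{approx}(a) forces $C_j(h,g) = 0$ for all $j \ge 1$ whenever $g$ is holomorphic. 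The antiholomorphic case is obtained by passing to adjoints and using $(\Tfm)^* = T_{\bar f}^{(m)}$. Locality (bidifferentiality of the $C_j$) then follows automatically from the separation property itself, which forces each $C_j$ to involve only holomorphic derivatives in its first slot and antiholomorphic derivatives in its second.

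For (c), I would apply Karabegov's theorem \cite{Kar1}, which sets up a bijection between star products of separation-of-variables type on $(M,\w)$ and formal $(1,1)$-forms
\begin{equation*}
\w_\nu = \tfrac{1}{\nu}\w_{-1} + \w_0 + \nu\w_1 + \cdots,
\end{equation*}
with $\w_{-1}$ determined (up to sign) by the commutator normalisation. To pin down the Karabegov form of $\star_{BT}$, I would compute the covariant symbol $\sm(\Tfm) = \Im(f)$ (the Berezin transform) and compare its complete asymptotic expansion---the content of the joint work with Karabegov \cite{KS}---to Karabegov's abstract description of the left/right $\star$-multiplication operators. The Liouville volume form entering the definition of $\Pim$ contributes, through the naturally induced fibre metric on the canonical bundle $K_M$, exactly the correction piece $\w_{can}$. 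Matching leading and next-to-leading coefficients yields the Karabegov form $-\tfrac{1}{\nu}\w + \w_{can}$.

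Finally (b) is extracted from (c) by substituting the Karabegov form into Karabegov's general formula expressing the Deligne-Fedosov class of a separation-of-variables star product as $\tfrac{1}{\i}$ times a combination of $[\w_\nu]$ and the canonical class $\delta = c_1(K_M) = [\w_{can}]$: plugging in $[\w_\nu] = -\tfrac{1}{\nu}[\w] + \delta$ and simplifying returns $cl(\star_{BT}) = \tfrac{1}{\i}\bigl(\tfrac{1}{\nu}[\w] - \tfrac{\delta}{2}\bigr)$. The hard step is (c): one must track enough subleading terms in the Berezin transform expansion to recognise the full Karabegov data, a computation that rests on the off-diagonal Bergman kernel asymptotics and their compatibility with the covariant symbol calculus. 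Parts (a) and (b) are then comparatively clean corollaries of this analytic input and Karabegov's cohomological framework.
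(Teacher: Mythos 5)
Your treatment of part (a) contains the real gap. The ``exact identity'' $\Pim(g s)=g s$ for a function $g$ holomorphic only on an open set $U\subseteq M$ is false: $gs$ is then not a global section, and even for a globally smooth $g$ that happens to be holomorphic on $U$ the Bergman projector $\Pim$ is a nonlocal integral operator, so no exact identity survives localisation; on compact $M$ the only globally holomorphic functions are constants, so $\Tma{h}\Tma{g}=\Tma{hg}$ holds exactly only in that vacuous case. Worse, your claim that locality of the $C_j$ ``follows automatically from the separation property'' is circular: as the paper notes just before \refD{sep}, one needs the star product to be local (the $C_j$ bidifferential) \emph{before} the separation-of-variables condition for locally defined holomorphic and antiholomorphic functions even makes sense. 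In the actual proof in \cite{KS} neither locality nor the separation property is proved by such a direct operator identity; both are read off from an identification of $\star_{BT}$ inside Karabegov's classification, and that identification requires exactly the analytic input you reserve for part (c) --- the complete asymptotic expansions of the Berezin transform and of the twisted product $R^{(m)}(f,g)=\sm(\Tfm\Tgm)$, both resting on the off-diagonal Bergman kernel expansion. So (a) is not a clean corollary independent of (c); it is a consequence of the same identification that yields (c).

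For (b) and (c) your outline is broadly the route of \cite{KS}: compare the asymptotic expansion of $\Im$ with Karabegov's formal Berezin transform, extract the Karabegov form, and pass to the Deligne--Fedosov class via Karabegov's cohomological classification. One structural step you omit is that $\star_{BT}$ is identified as the \emph{opposite of the dual}, $\star_{BT}=\tilde\star^{op}$, of an auxiliary Karabegov star product $\star$ whose form is $\frac 1\nu\w+\F(\i\partial\bar\partial\log\Bm(\a,\a))$; the form $-\frac 1\nu\w+\w_{can}$ of \refE{starform} is the form of the dual $\tilde\star$, and the sign flip together with the appearance of $\w_{can}$ comes from this duality (and the trace/volume normalisation), not merely from ``matching leading and next-to-leading coefficients'' of $\Im$. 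Without that intermediate dual--opposite step the bookkeeping of signs, and of which pseudo-K\"ahler form ($\w$ versus $-\w$) the separation-of-variables structure refers to, cannot be completed.
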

The Karabegov form has not yet defined here.
We will introduce it below in \refSS{kara}.
Using $K$-theoretic methods the formula for $cl(\star_{BT})$ was also given by
Hawkins \cite{Haw}.

\subsection{Star product of geometric quantization}
Tuynman's result \refE{tuyn} relates
the operators of geometric quantization with
K\"ahler polarization and  
the  BT operators. As the latter define a star product
it can be used to give also a star product $\star_{GQ}$ 
associated to geometric quantization.
Details can be found in \cite{Schldef}.
This star product will be equivalent to  the BT star product,
but it is not of separation of variables type.
The equivalence is given by the  
$\C[[\nu]]$-linear map induced by
\begin{equation}
B(f):=f-\nu\frac {\Delta}{2} f=
(id-\nu\frac {\Delta}{2})f. 
\end{equation}
We obtain 
$B(f)\star_{BT} B(g)=B(f\star_{GQ} g)$.

\subsection{Trace for the BT star product}
{}From \refE{trop}  the following complete 
asymptotic expansion for $m\to\infty$ 
can be deduced \cite{Schldef}, \cite{BPUspec}):
\begin{equation}
\label{E:tras}
\Tr^{(m)}(T_f^{(m)})\quad\sim\quad
m^n\left(\sum_{j=0}^\infty \left(\frac 1{m}\right)^j\tau_j(f)\right),
\quad \mathrm{with}\quad \tau_j(f)\in\C\ .
\end{equation}
We define the $\C[[\nu]]$-linear map
\begin{equation}
\label{E:trace}
\Tr:\Cim[[\nu]]\to \nu^{-n}\,\C[[\nu]],\quad
\Tr f:=\nu^{-n}\sum_{j=0}^\infty\nu^j\tau_j(f),
\end{equation}
where the $\tau_j(f)$ are given
by the asymptotic expansion \refE{tras} for $f\in\Cim$
and for arbitrary elements by $\C[[\nu]]$-linear extension.
\begin{proposition} \cite{Schldef}
The map $\Tr$ is a trace, i.e., we have
\begin{equation}
\label{E:trsym}
\Tr (f\star g)=\Tr (g\star f)\ .
\end{equation}
\end{proposition}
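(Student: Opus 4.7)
The plan is to use the cyclicity of the operator trace $\Tr^{(m)}$ on the finite-dimensional space $\ghm$ together with the asymptotic expansion that defines $\star_{BT}$, and then compare coefficients of the two resulting asymptotic expansions. The key mechanical inputs are three: (i) for every $m$ one has $\Tr^{(m)}(T_f^{(m)}T_g^{(m)})=\Tr^{(m)}(T_g^{(m)}T_f^{(m)})$ because $\Tr^{(m)}$ is the ordinary matrix trace on $\End(\ghm)$; (ii) the star product approximation \refE{sass} together with the elementary bound $|\Tr^{(m)}(A)|\le \dim(\ghm)\cdot\|A\|$; (iii) the fact that $\dim(\ghm)=O(m^n)$, which follows from Hirzebruch--Riemann--Roch once $L$ is very ample.

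First I would fix $N\in\N$ and write, by \refT{star},
\begin{equation*}
T_f^{(m)}T_g^{(m)}\;=\;\sum_{0\le j<N}\Big(\tfrac 1m\Big)^{\!j} T^{(m)}_{C_j(f,g)}+R_{N,m},\qquad \|R_{N,m}\|\le K_N(f,g)\Big(\tfrac 1m\Big)^{\!N}.
\end{equation*}
Apply $\Tr^{(m)}$. The contribution of the remainder is bounded by $\dim(\ghm)\cdot\|R_{N,m}\|=O(m^{n-N})$. Substituting the asymptotic expansion \refE{tras} for each $\Tr^{(m)}\big(T^{(m)}_{C_j(f,g)}\big)$ and collecting powers of $1/m$, I obtain a genuine asymptotic expansion
\begin{equation*}
\Tr^{(m)}\big(T_f^{(m)}T_g^{(m)}\big)\;\sim\;m^n\sum_{\ell=0}^\infty\Big(\tfrac 1m\Big)^{\!\ell}\,\sigma_\ell(f,g),\qquad \sigma_\ell(f,g)=\sum_{j+k=\ell}\tau_k\!\big(C_j(f,g)\big).
\end{equation*}
By the very definition of $\Tr$ in \refE{trace} applied to the series $f\star_{BT}g=\sum_j\nu^j C_j(f,g)$, this means exactly that the formal series $\nu^n\,\Tr(f\star_{BT}g)\in\C[[\nu]]$ encodes the coefficients of this asymptotic expansion.

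The same reasoning applied to $T_g^{(m)}T_f^{(m)}$ yields an analogous asymptotic expansion whose coefficient sequence is encoded by $\nu^n\,\Tr(g\star_{BT}f)$. Now invoke cyclicity: $\Tr^{(m)}(T_f^{(m)}T_g^{(m)})=\Tr^{(m)}(T_g^{(m)}T_f^{(m)})$ for every $m$. Two functions of $m$ admitting complete asymptotic expansions in $1/m$ and agreeing for all $m$ must have identical asymptotic coefficients, so $\sigma_\ell(f,g)=\sigma_\ell(g,f)$ for every $\ell\ge 0$. This gives $\Tr(f\star_{BT}g)=\Tr(g\star_{BT}f)$ for $f,g\in\Cim$, and the identity extends to all of $\Cim[[\nu]]$ by $\nu$-adic continuity and $\C[[\nu]]$-linearity of $\Tr$ and $\star_{BT}$.

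The main obstacle, and the reason the argument is not completely automatic, is the interplay between the operator-norm remainder in \refE{sass} and the trace bound: the naive estimate $|\Tr^{(m)}(R_{N,m})|\le \dim(\ghm)\cdot\|R_{N,m}\|$ loses a factor $m^n$, so one must take $N$ sufficiently large (depending on the order of the coefficient one wants to extract) to obtain a genuine asymptotic expansion rather than only a finite-order statement. The polynomial growth of $\dim(\ghm)$, together with the freedom to choose $N$ arbitrarily in \refT{star}, is exactly what makes this work and allows the formal series $\Tr(f\star_{BT}g)$ to be defined unambiguously and shown to be symmetric in $f,g$.
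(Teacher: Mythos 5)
Your argument is correct and is essentially the proof the paper defers to \cite{Schldef}: cyclicity of the ordinary matrix trace $\Tr^{(m)}$ on $\End(\ghm)$, combined with the expansion \refE{sass}, the trace asymptotics \refE{tras}, and the observation that $\dim\ghm=O(m^n)$ (from \refE{trop} with $f=1$) so the remainder can be beaten by taking $N$ large. You also correctly flag the only delicate point, namely the $m^n$ loss in passing from the operator-norm bound to the trace bound.
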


\subsection{Karabegov quantization}\label{SS:kara}
In \cite{Kar1,Kar2} Karabegov not only gave the notion of
{\it separation of variables type}, but also a proof of 
existence of such formal star products for any 
K\"ahler manifold, whether compact, non-compact, quantizable, or
non-quantizable.
Moreover, he classified them completely as individual 
star product not only up to equivalence.

He starts with $(M,\omega_{-1})$ a pseudo-K\"ahler manifold, i.e.
a complex manifold with a non-degenerate closed $(1,1)$-form not
necessarily positive.

A formal form $\widehat{\omega}=
(1/\nu)\omega_{-1}+\omega_0+\nu\omega_1+\dots$ is
called a formal deformation of the form $(1/\nu)\omega_{-1}$ if the forms
$\omega_r,\ r\geq 0$, are closed but not necessarily nondegenerate
(1,1)-forms on $M$.
It was shown in \cite{Kar1} that all deformation quantizations with separation
of variables on the  pseudo-K\"ahler manifold $(M,\omega_{-1})$ are
bijectively parametrized by the formal deformations of the form
$(1/\nu)\omega_{-1}$. 

Assume that we have such a star product 
$(\mathcal{A}:=\cim[[\nu]],\star)$. Then 
for $f,g\in\mathcal{A}$ the  operators of left
and right multiplication  $L_f,R_g$  are given by  
$L_fg=f\star g=R_gf$. The
associativity of the star-product $\star$ is equivalent to the
fact that $L_f$ commutes with $R_g$ for all $f,g\in{\mathcal{A}}$.
If a star  product is differential then 
$L_f,R_g$ are formal differential operators.

Karabegov constructs his star product 
associated to the deformation $\widehat{\w}$ in the following way.
First he chooses on every 
contractible
coordinate chart $U\subset M$ (with holomorphic
coordinates $\{z_k\}$)
its formal potential  
\begin{equation}\label{E:formp}
\widehat{\Phi}=(1/\nu)\Phi_{-1}+\Phi_0+\nu\Phi_1+\dots,
\qquad
\widehat{\omega}=i\partial\bar\partial\widehat{\Phi}.
\end{equation}
Then construction is done in such a way that we have for 
the left (right) multiplication operators   on $U$
\begin{equation}
L_{\partial\Phi/\partial
z_k}=\partial\Phi/\partial
z_k+\partial/\partial z_k, 
\quad\text{and}\quad 
R_{\partial\Phi/\partial\bar
z_l}=\partial\Phi/\partial\bar
z_l+\partial/\partial\bar z_l.
\end{equation} 
The set
$\mathcal{L}(U)$ of all left multiplication
operators on $U$ is completely described as the
set of all formal differential operators
commuting with the point-wise multiplication
operators by antiholomorphic coordinates
$R_{\bar z_l}=\bar z_l$ and the operators
$R_{\partial\Phi/\partial\bar
z_l}$. From the knowledge of $\mathcal{L}(U)$ the 
star product on $U$ can be reconstructed.
The local
star-products agree on the intersections of the
charts and define the global star-product
$\star$ on $M$.

We have to mention that  this  original
 construction of Karabegov will yield a star product of separation of
variable type but with the role of holomorphic and antiholomorphic
variables switched.
This says 
  for any open
subset $U\subset M$ and any holomorphic function $a$ and
antiholomorphic function $b$ on $U$ the operators $L_a$ and
$R_b$ are the operators of point-wise multiplication by $a$ and
$b$ respectively, i.e., $L_a=a$ and $R_b=b$. 

\subsection{Karabegov's formal Berezin transform}

Given such a  star products $\star$, Karabegov introduced the formal
{\it Berezin transform} $I$
as the unique formal differential operator on
$M$ such that for any open subset $U\subset M$,
holomorphic functions $a$ and antiholomorphic
functions $b$ on $U$ the relation $I(a\cdot b)=b\star
a$ holds (see \cite{Kar3}). 
He shows  that
$I=1+\nu\Delta+\dots$, where $\Delta$ is the
Laplace operator corresponding to the
pseudo-K\"ahler metric on $M$. 

Karabegov  considered the following associated star products 
First the {\it dual}
star-product $\tilde\star$ on $M$ is defined for
$f,g\in \mathcal{A}$ by the formula 
\begin{equation}
f\,\tilde\star\, g=I^{-1}(Ig\star If).
\end{equation}
It is  a star-product with separation of
variables on the pseudo-K\"ahler manifold
$(M,-\omega_{-1})$. Its  formal Berezin transform equals
$I^{-1}$, and thus the dual to $\ \tilde\star\ $ is
$\ \star\ $.
Note that it is not a star product of the same pseudo-K\"ahler 
manifold.
Denote by $\tilde\omega=-(1/\nu)\omega_{-1}
+\tilde\omega_0+\nu\tilde\omega_1+\dots$ the
formal form parametrizing the star-product
$\tilde\star$.  

Next, the opposite of the dual
star-product, $\star'=\tilde\star^{op}$, is given
by the formula 
\begin{equation}
f\star' g=I^{-1}(If\star Ig).
\end{equation}
It  defines a deformation quantization with
separation of variables on $M$, but with the
roles of holomorphic and antiholomorphic
variables swapped - with respect to $\star$. 
It could be described also as 
 a deformation quantization with
separation of variables on the pseudo-K\"ahler
manifold $(\overline{M},\omega_{-1})$ where
$\overline{M}$ is the manifold $M$ with the
opposite complex structure.
But now the pseudo-K\"ahler form will be the same.
Indeed  
the formal Berezin
transform $I$ establishes an equivalence of
deformation quantizations $(\mathcal{A},\star)$ and
$(\mathcal{A},\star')$. 

How is the relation to the Berezin-Toeplitz star product 
$\star_{BT}$ of \refT{star}?
There exists a certain formal deformation 
$\widehat{\w}$ of the form
$(1/\nu) \w$ which yields a star product $\star$ in the Karabegov sense.
The opposite of its dual will be equal to the 
Berezin-Toeplitz star product, i.e. 
\begin{equation}
\star_{BT}\ =\ \tilde\star^{op}\ =\ \star '\ .
\end{equation}
The classifying Karabegov form $\ \tilde\w\ $ of $\ \tilde\star\ $  
will be the form \refE{starform}.
Note as $\ \star\ $ and $\ \star_{BT}\ $ are equivalent via $\ I\ $, we have 
 $cl(\star)=cl(\star_{BT})$, see the formula \refE{starcl}.
We will identify $\widehat{\w}$ in \refSS{ident}.

\section{The disc bundle and global operators}
\label{S:global}
In this section we identify the bundles $L^m$ over the K\"ahler
manifold $M$ as associated line bundles of one unique 
$S^1$-bundle over $M$.
The Toeplitz operator will appear as ``modes'' of a global Toeplitz
operator. A detailed analysis of this global operator will yield
a proof of \refT{approx} part (b) and part (c).

Moreover, we will need this set-up to discuss coherent states, 
Berezin symbols, and the Berezin transform in the next sections.
For a more detailed presentation see
\cite{Schlhab}.

\subsection{The disc bundle}
We will assume that the quantum line bundle $L$ is already very ample, i.e. 
it has enough global holomorphic sections 
to embed $M$ into projective space.
From the bundle%
\footnote{As the connection $\nabla$ will not be needed anymore, I 
will drop it in the notation.}
$(L,h)$  we pass to its dual
$\ (U,k):=(L^*,h^{-1})\ $ 
with dual metric $k$.
Inside of the total space $U$ we consider the circle bundle
\begin{equation}
Q:=\{\lambda\in U\mid k(\lambda,\lambda)=1\},
\end{equation} 
the (open) disc bundle and (closed) disc bundle respectively
\begin{equation}
D:=\{\lambda\in U\mid k(\lambda,\lambda)<1\},\qquad
\overline{D}:=\{\lambda\in U\mid k(\lambda,\lambda)\le 1\}.
\end{equation} 
Let 
$\tau: U\to M$ the projection (maybe restricted to the subbundles).

For the projective space $\P^N(\C)$ with 
the hyperplane section bundle  $H$
as quantum line bundle 
the bundle $U$ is just the tautological
bundle. Its  fibre over the point $z\in\P^N(\C)$ consists of
the line in $\C^{N+1}$ which is represented by $z$. In particular,
for the projective space 
the total space of $U$ with  the zero section removed can be identified
with $\C^{N+1}\setminus\{0\}$.
The same picture remains true for the via the very ample quantum
line bundle in projective space embedded manifold $M$.
The quantum line bundle will be the pull-back of $H$ 
(i.e. its restriction to the embedded manifold) and its
dual is the pull-back of the tautological bundle.

In the following we use $E\setminus 0$ to denote the total space of
a vector bundle $E$ with the image of the zero section removed. 
Starting from the  real valued function 
$\hat k(\la):=k(\la,\la)$ on $U$ we define
 $\tilde a:=\frac {1}{2\i}(\d-\pbar)\log \hat k$ on
$U\setminus 0$ (the derivation are taken with respect to the complex
structure on $U$) and denote by $\alpha$
its restriction   to $Q$.
With the help of the quantization condition \refE{quant} we obtain 
$d\a=\tau^*\w$ (with the deRham differential $d=d_Q$) 
and that in fact $\mu=\frac 1{2\pi}\tau^*\Omega
\wedge \a$ is a volume form
on $Q$. 
Indeed $\a$ is a contact form for the contact manifold $Q$.
As far as the integration is concern we get 
\begin{equation}
\int_Q(\tau^*f)\mu=\int_Mf\,\Omega,\qquad \forall f\in\Cim.
\end{equation}
Recall that $\Omega$ is the Liouville volume form on $M$.

\subsection{The generalized Hardy space}
With respect to $\mu$ we take the L${}^2$-completion $\Lqv$
of the space of functions on $Q$.
The generalized {\em Hardy space} $\Hc$ is the closure of the
space of those 
functions in  $\Lqv$ which can be extended to
holomorphic functions on the whole
disc bundle
$\bar D$.
The generalized {\em Szeg\"o projector} is the projection
\begin{equation}
\label{E:szproj}
\Pi:\Lqv\to \Hc\ .
\end{equation}
 By the natural circle action the bundle 
$Q$ is an $S^1$-bundle and the tensor powers of $U$ can be
viewed as associated line bundles. The space $\Hc$ is preserved
by the $S^1$-action.
It can be decomposed into eigenspaces 
$\Hc=\prod_{m=0}^\infty \Hm$ where
 $c\in S^1$ acts on $\Hm$ as multiplication
by $c^m$.
The Szeg\"o projector is $S^1$ invariant and 
can be decomposed into its components, the Bergman projectors  
\begin{equation}\label{E:berg}
\pimh:\Lqv\to\Hm.
\end{equation}

Sections of $L^m=U^{-m}$ can be identified with functions $\psi$ on $Q$ which
satisfy the equivariance condition
$\psi(c\la)=c^m\psi(\la)$, i.e. which are homogeneous of degree $m$.
This identification is given via the map
\begin{equation}\label{E:secident}
\gamma_m:\Lpm \to \Lqv,\quad s\mapsto \psi_s\quad\text{where}\quad
\psi_s(\alpha)=\alpha^{\otimes m}(s(\tau(\alpha))),
\end{equation}
which turns out to be an isometry onto its image.
On $\Lpm$ we have the  scalar product  \refE{skp}.
Restricted to the holomorphic sections we obtain the 
isometry 
\begin{equation}\label{E:sechident}
\gamma_m:\ghm \cong \Hm.
\end{equation} 
In the case of $\P^N(\C)$ this correspondence is nothing else
as the identification of the global sections of the $m^{th}$ tensor
powers of the hyper plane section bundle with the homogenous 
polynomial functions of degree $m$ on $\C^{N+1}$.

\subsection{The Toeplitz structure}
There is the notion of Toeplitz structure
$(\Pi,\Sigma)$ as developed by Boutet de Monvel  and
Guillemin
in \cite{BGTo,GuCT}.
I do not want to present the general theory only the
specialization to our situation.
Here 
$\Pi $ is the  Szeg\"o projector  \refE{szproj}
and $\Sigma$  is  the submanifold 
\begin{equation}
\Sigma:=\{\;t\alpha(\lambda)\;|\;\lambda\in Q,\,t>0\ \}\ \subset\  T^*Q
\setminus 0
\end{equation}
of the tangent bundle of $Q$ 
defined with the help of the 1-form $\alpha$.
It turns out that  $\Sigma$ is a 
symplectic submanifold, called a symplectic cone.

A (generalized) {\em Toeplitz operator} of order $k$ is  an operator
$A:\Hc\to\Hc$ of the form
$\ A=\Pi\cdot R\cdot \Pi\ $ where $R$ is a
pseudodifferential operator ($\Psi$DO)
of order $k$ on
$Q$.
The Toeplitz operators constitute a ring.
The  symbol of $A$ is the restriction of the
principal symbol of $R$ (which lives on $T^*Q$) to $\Sigma$.
Note that $R$ is not fixed by $A$, but
Guillemin and Boutet de Monvel showed that the  symbols
are well-defined and that they obey the same rules as the
symbols of   $\Psi$DOs.
In particular, the following relations are valid:
\begin{equation}
\label{E:symbol}
\sigma(A_1A_2)=\sigma(A_1)\sigma(A_2),\qquad
\sigma([A_1,A_2])=\i\{\sigma(A_1),\sigma(A_2)\}_\Sigma.
\end{equation}
Here $\{.,.\}_{\Sigma}$ is the restriction of the canonical
Poisson structure of $T^*Q$ to $\Sigma$ coming from the
canonical symplectic form  on $T^*Q$.

\subsection{A sketch of the proof of \refT{approx}}
For this we need only to consider the following 
two generalized Toeplitz operators:
\begin{enumerate}
\item
The generator of the circle action
gives the  operator $D_\varphi=\dfrac 1{\i}\dfrac {\partial}
{\partial\varphi}$, where $\varphi$ is the angular variable. 
It is an operator of order 1 with symbol $t$.
It operates on $\Hm$ as multiplication by $m$.
\item
For $f\in\Cim$ let $M_f$ be the  operator on
$\Lqv$ 
corresponding to multiplication with $\tau^*f$.
We set
\begin{equation}
T_f=\Pi\cdot M_f\cdot\Pi:\quad \Hc\to\Hc\ .
\end{equation}
As $M_f$ is constant along the fibres of $\tau$, 
the operator $T_f$ 
commutes with the circle action.
Hence we can decompose
\begin{equation}
T_f=\prod\limits_{m=0}^\infty\Tfm\ ,
\end{equation}
where $\Tfm$ denotes the restriction of $T_f$ to $\Hm$.
After the identification of $\Hm$ with $\ghm$ we see that these $\Tfm$
are exactly the Toeplitz operators  $\Tfm$ introduced in \refS{btq}.
We call   $T_f$   the global Toeplitz operator and
the $\Tfm$ the local Toeplitz operators.
The operator $T_f$ is  of order $0$.
Let us denote by
$\ \tau_\Sigma:\Sigma\subseteq T^*Q\to Q\to M$ the composition
then we obtain for its symbol  $\sigma(T_f)=\tau^*_\Sigma(f)$.
\end{enumerate}
Now we are able to proof \refE{dirac}.
First we introduce for a fixed $t>0$ 
\begin{equation}
\Sigma_t:=\{t\cdot \alpha(\la)\mid \la\in Q\}\quad \subseteq \Sigma.
\end{equation}
It turns out that 
 $\ {\omega_\Sigma}_{|\Sigma_t}=-t\tau_\Sigma^*\omega\ $.
The commutator
$[T_f,T_g]$ is a  Toeplitz operator of order $-1$.
From the above  we obtain
with \refE{symbol} that the 
symbol of the commutator equals
\begin{equation}
\sigma([T_f,T_g])(t\alpha(\lambda))=\i\{\tau_\Sigma^* f,\tau_\Sigma^*g
\}_\Sigma(t\alpha(\lambda))=
-\i t^{-1}\{f,g\}_M(\tau(\lambda))\ .
\end{equation}
We consider the Toeplitz operator
\begin{equation}
A:=D_\varphi^2\,[T_f,T_g]+\i D_\varphi\, T_{\{f,g\}}\ .
\end{equation}
Formally this is an operator of order 1.
Using $\ \sigma(T_{\{f,g\}})=\tau^*_\Sigma \{f,g\}$ 
and $\sigma(D_\varphi)=t$ we see that its principal
symbol vanishes. Hence
it is an operator of order 0.
Now $M$ and hence also $Q$ are compact manifolds.
 This implies that $A$ is a bounded
operator ($\Psi$DOs of order 0 on compact manifolds are bounded).
It is obviously $S^1$-invariant and we can write
$A=\prod_{m=0}^\infty A^{(m)}$
where $A^{(m)}$ is the restriction of $A$ on the space $\Hm$.
For the norms we get $\ ||A^{(m)}||\le ||A||$.
But
\begin{equation}
A^{(m)}=A_{|\Hm}=m^2[\Tfm,\Tgm]+\i m\Tfgm.
\end{equation}
Taking the norm bound and dividing it by $m$ we get 
part (b) of \refT{approx}.
Using \refE{sechident} the norms involved indeed coincide.

Quite similar one can prove part (c) of  \refT{approx} and
more general the existence of the coefficients $C_j(f,g)$ 
for the Berezin-Toeplitz star product of \refT{star}.
See \cite{Schldef} and \cite{Schlhab} for the details.

\section{Coherent States and  Berezin symbols}
\label{S:coherent}
\subsection{Coherent States}\label{SS:coherent}

Let the situation be as in the previous section. In particular $L$ is assumed
 to be
already very ample, $U=L^*$ is the dual of the quantum line bundle, 
$Q\subset U$ the unit circle bundle, and
$\tau:Q\to M$ the projection.
In particular, recall  the correspondence \refE{secident} 
$\ \psi_s(\alpha)=\alpha^{\otimes m}(s\tau(\a))$
of $m$-homogeneous functions $\psi_s$ on $U$ with sections 
of $L^m$.
To obtain this  correspondence we fixed the section $s$ and varied $a$.

Now we do the opposite. We fix
$\a\in U\setminus 0$ and vary the sections $s$. 
Obviously this yields 
a linear form
on $\ghm$ and hence with the help of the scalar product \refE{skp}
we  make the following
\begin{definition}\label{D:cohvec}
(a) The {\it coherent vector (of level m)}  associated to
the point $\a\in U\setminus 0$ is the unique element $\eam$
of $\ghm$ such that
\begin{equation}\label{E:cohvec}
\skp{\eam}{s}=\psi_s(\alpha)=\alpha^{\otimes m}(s(\tau(\alpha)))
\end{equation}
for all $s\in\ghm$.
\newline
(b) The {\it coherent state (of level m)}  associated to
$x\in M$ is  the projective class
\begin{equation} 
\e^{(m)}_x:= [\eam]\in\P(\ghm),\qquad \a\in\tau^{-1}(x), \a\ne 0.
\end{equation}
\end{definition}
Of course, we have to show that the object in (b) is well-defined.
Recall that $\skp{.}{.}$ denotes the scalar product on the space of
global sections $\gulm$.
In the convention of this review it will
be anti-linear in the first argument and linear in
the second argument.
The coherent vectors are antiholomorphic in $\alpha$ and fulfil
\begin{equation}\label{E:cohtrans}
e_{c\alpha}^{(m)}={\bar c}^m\cdot \eam,\qquad c\in\C^*:=\C\setminus\{0\}\ .
\end{equation}
Note that $\eam\equiv 0$ would imply 
that all sections will vanish at the point $x=\tau(\alpha)$.
Hence, the sections of $L$ cannot be used to embed $M$ into 
projective space, which 
is a contradiction to the very-ampleness of $L$.
Hence, $\eam\not\equiv 0$ and due to \refE{cohtrans} the 
class 
$$[\eam]:=\{s\in\ghm\mid \exists c\in\C^*:s=c\cdot \eam\}$$
is a well-defined 
element of the projective space $\P(\ghm)$, only depending on 
$x=\tau(\alpha)\in M$.

\medskip
This kind of coherent states go back to  
Berezin. A coordinate independent version and extensions
to line bundles were given by  
Rawnsley \cite{Raw}.
It plays an important role in the work of Cahen, Gutt, and
Rawnsley on the quantization of K\"ahler manifolds 
\cite{CGR1,CGR2,CGR3,CGR4},
 via Berezin's covariant symbols.
I will return to this in \refSS{berstar}.
In these works 
the coherent vectors are parameterized by the elements of
$L\setminus 0$.
The  definition here uses the points of the total space of the 
dual bundle $U$. It  has the advantage that one can consider
all tensor powers of $L$ together on an equal footing.
\begin{definition}
The {\em coherent state embedding} is the 
antiholomorphic embedding
\begin{equation}\label{E:cohemb}
M\quad \to\quad \P(\ghm)\ \cong\ \pnc[N],
\qquad 
x\mapsto [e^{(m)}_{\tau^{-1}(x)}].
\end{equation}
\end{definition}
\noindent
Here $N=\dim\ghm -1$.
In this review, in abuse of notation, we will understand
under $\tau^{-1}(x)$ always a non-zero element of the 
fiber over $x$.
The coherent state embedding is up to conjugation the 
embedding of \refSS{embedd}
with respect to  an orthonormal basis of the sections.
In \cite{BerSchlcse}  further results on the geometry 
of the coherent state embedding are given.

\subsection{Covariant Berezin  symbols}\label{SS:symbols}
We start with the 
\begin{definition}
The {\em covariant Berezin symbol $\sigma^{(m)}(A)$  (of level $m$)}
of an operator $A\in\eghm$ is defined as
\begin{equation}\label{E:covB}
\sigma^{(m)}(A):M\to\C,\qquad
x\mapsto \sm(A)(x):=
\frac {\skp {\eam}{A\eam}}{\skp {\eam}{\eam}},\quad
\alpha\in\tau^{-1}(x).
\end{equation}
\end{definition}
As the factors appearing in  \refE{cohtrans} will cancel, 
it is a well-defined function on $M$.
If the level $m$ is clear from the context I will sometimes drop
it in the notation.

We consider also  
the \emph{coherent projectors} used by Rawnsley
\begin{equation}\label{E:cohproj}
P^{(m)}_{x}=\frac {|\eam\rangle\langle \eam|}{\langle
  \eam,\eam\rangle},\qquad \alpha\in\tau^{-1}(x)
\ .
\end{equation}
Here we used the convenient bra-ket notation of the physicists.
Recall, if $s$ is a section then 
$$
P_x^{(m)}s=
\frac {|\eam\rangle \skp {\eam}{s}}{\langle \eam,\eam\rangle}\
=
\frac {\skp {\eam}{s}}{\skp {\eam}{\eam}} \eam.
$$

Again the projector is well-defined on $M$.
With its help the covariant symbol can be expressed as
\begin{equation}\label{E:covB2}
\sm(A)=\Tr(AP^{(m)}_x).
\end{equation}
{}From the definition of the symbol it follows 
that $\sm(A)$  is real analytic and 
\begin{equation}\label{E:covad}
\sm(A^*)=\overline{\sm(A)} \ .
\end{equation}


\subsection{Rawnsley's $\epsilon$ function}\label{SS:epsilon}
Rawnsley \cite{Raw} introduced a very helpful function on the 
manifold $M$ relating the local metric in the bundle 
with the scalar product on coherent states.
In our dual description we define it in the following way.
\begin{definition}\label{D:eps}
{\it Rawnsley's  epsilon function} is the function
\begin{equation}\label{E:eps}
M\to\cim,\qquad 
x\mapsto \epsm(x):=\frac
{h^{(m)}(\eam,\eam)(x)}
{\skp {\eam}{\eam}},\quad \a\in\tau^{-1}(x).
\end{equation}
\end{definition}
With \refE{cohtrans} it is clear that it is a well-defined function
on $M$. Furthermore, using \refE{cohvec}
$$
0\ne \skp {\eam}{\eam}=\a^{\otimes m}(\eam(\tau(\a)))
$$
it follows that 
\begin{equation}
\label{E:nonzero}
\eam(x)\ne 0,\quad \text{for}\ x=\tau(\a), \quad\text{and}\ 
\epsm>0.
\end{equation}
Hence, we can define the modified measure
\begin{equation}\label{E:ome}
\Ome(x):=\epsm(x)\Om(x)
\end{equation}
for the space of functions on $M$ and obtain a modified
scalar product 
${\langle .,.\rangle}^{(m)}_\epsilon$ for $\cim$. 
\begin{proposition}
For $s_1,s_2\in\ghm$ we have
\begin{equation}\label{E:heps}
\begin{aligned}
h^{(m)}(s_1,s_2)(x)
&=\frac {\overline{\skp {\eam}{s_1}}\; 
   \skp {\eam}{s_2}}
{\skp {\eam}{\eam}}\cdot \epsm(x)
\\
&=\skp {s_1}{P^{(m)}_x s_2}\cdot
\epsm(x)\ .
\end{aligned}
\end{equation}
\end{proposition}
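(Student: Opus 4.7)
The plan is to prove the two equalities separately, starting with the easier second one. For the second equality, I would simply unpack the definition of the coherent projector $P_x^{(m)}$: acting on $s_2$ it gives $\frac{\langle e_\alpha^{(m)}, s_2\rangle}{\langle e_\alpha^{(m)}, e_\alpha^{(m)}\rangle} e_\alpha^{(m)}$, and then pairing with $s_1$ on the left, using anti-linearity in the first slot of the scalar product, converts $\langle s_1, e_\alpha^{(m)}\rangle$ into $\overline{\langle e_\alpha^{(m)}, s_1\rangle}$. Comparing with the middle expression in the claim, this establishes the right-hand equality immediately.

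The first equality is the heart of the statement. The key intermediate fact I would prove is that for $\alpha\in Q$ and $x=\tau(\alpha)$,
\begin{equation*}
\overline{\skp{\eam}{s_1}}\cdot\skp{\eam}{s_2}\ =\ h^{(m)}(s_1,s_2)(x).
\end{equation*}
To see this, I would work in a local holomorphic frame $e$ of $L$ with dual frame $e^*$ of $U$, write $\alpha=c\cdot e^*$ at $x$, and observe that the condition $\alpha\in Q$, i.e.\ $k(\alpha,\alpha)=1$, combined with $k(e^*,e^*)=\hat h^{-1}$, forces $|c|^2=\hat h(x)$. Writing $s_i(x)=\hat s_i(x)\,e^{\otimes m}$, the defining property of coherent vectors \refE{cohvec} gives $\skp{\eam}{s_i}=\alpha^{\otimes m}(s_i(x))=c^m\hat s_i(x)$. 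Taking the product yields $|c|^{2m}\overline{\hat s_1(x)}\hat s_2(x)=\hat h(x)^m\overline{\hat s_1(x)}\hat s_2(x)$, which is exactly $h^{(m)}(s_1,s_2)(x)$.

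Now to close the argument, I would specialize this identity to $s_1=s_2=\eam$, which yields $\skp{\eam}{\eam}^2=h^{(m)}(\eam,\eam)(x)$ (the left side is real and positive, so the square root is unambiguous). Comparing with the definition \refE{eps} of Rawnsley's epsilon function gives the pleasant consequence
\begin{equation*}
\epsm(x)\ =\ \skp{\eam}{\eam}.
\end{equation*}
Substituting the two identities into the right-hand side of the first claimed equality, the factor $\skp{\eam}{\eam}$ in the numerator (via $\epsm$) cancels the denominator, leaving exactly $h^{(m)}(s_1,s_2)(x)$.

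I don't expect a serious obstacle here: everything follows from systematic unpacking of the definitions, and the only subtle bookkeeping is keeping track of the duality between $h$ and $k$ and the anti-linearity convention for $\skp{\cdot}{\cdot}$. The small payoff along the way is the clean identity $\epsm(x)=\skp{\eam}{\eam}$, which is worth recording since it makes subsequent manipulations of the Berezin transform much more transparent.
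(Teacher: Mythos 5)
Your argument is correct, and it reaches the result by a route that is genuinely different in mechanism from the paper's, so a comparison is worthwhile. One small point you should make explicit: your key intermediate identity $\overline{\skp{\eam}{s_1}}\,\skp{\eam}{s_2}=h^{(m)}(s_1,s_2)(x)$ holds only after normalizing $\alpha$ to lie on the unit circle bundle $Q$ (for general $\alpha\in U\setminus 0$ an extra factor $k(\alpha,\alpha)^m$ appears); since both sides of the claimed formula are invariant under $\alpha\mapsto c\alpha$ by \refE{cohtrans}, restricting to $\alpha\in Q$ is legitimate, but it deserves a sentence. The paper's proof, by contrast, never chooses a holomorphic frame $e$ and never touches the dual metric $k$: it uses the nonvanishing coherent vector $\eam$ itself as a local frame near $x$, writes $s_i(x)=\hat s_i(x)\,\eam(x)$, deduces $\skp{\eam}{s_i}=\hat s_i(x)\skp{\eam}{\eam}$ and $h^{(m)}(s_1,s_2)(x)=\overline{\hat s_1}\hat s_2\, h^{(m)}(\eam,\eam)(x)$, and then $\epsm$ appears automatically as the ratio occurring in its definition \refE{eps}, without ever being evaluated; that computation works verbatim for any nonzero $\alpha$ in the fiber. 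Your frame computation with the constraint $k(\alpha,\alpha)=1$ costs a little normalization bookkeeping but buys the clean identity $\epsm(x)=\skp{\eam}{\eam}$ for $\alpha\in Q$, i.e.\ that Rawnsley's epsilon function coincides with the Bergman kernel on the diagonal $u_m$ of \refE{um}. That byproduct is genuinely useful (it is essentially equivalent to \refP{epssec} and underlies the asymptotics used in \refSS{ident}), so recording it, as you propose, is a reasonable trade for the slightly less intrinsic argument.
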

\begin{proof}
Due to \refE{nonzero} we can represent every section $s$ locally
at $x$ as $s(x)=\hat s(x)\eam$ with a local function 
$\hat s$.
Now
$$
\skp {\eam}{s}=
\a^{(m)}(\hat s(x)\eam(x))=
\hat s(x)\a^{(m)}(\eam(x))=
\hat s(x)\skp {\eam}{\eam}.
$$
We rewrite 
$h^{(m)}(s_1,s_2)(x)=\overline{\hat s_1}{s_2}
\hm(\eam,\eam)(x)$, and obtain
\begin{equation}
\hm(s_1,s_2)(x)=
\frac {\overline{\skp {\eam}{s_1}}}
{\skp {\eam}{\eam}}
\frac {{\skp {\eam}{s_2}}}
{\skp {\eam}{\eam}}\cdot
\hm(\eam,\eam)(x).
\end{equation}
{}From the definition \refE{eps} the first relation follows.
Obviously, it can be rewritten with the coherent
projector to obtain the second relation.
\end{proof}

There exists another useful description of the epsilon
function.
\begin{proposition}\label{P:epssec}
Let $s_1,s_2,\ldots ,s_k$ be an arbitrary orthonormal
basis of $\ghm$. Then
\begin{equation}
\epsm(x)=\sum_{j=1}^k \hm(s_j,s_j)(x).
\end{equation}
\end{proposition}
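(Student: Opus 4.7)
The cleanest route is to invoke the previous proposition \refE{heps} coordinate--freely. Fix a point $x\in M$ and a lift $\alpha\in\tau^{-1}(x)\setminus\{0\}$. Applying formula \refE{heps} to the pair $s_1=s_2=s_j$ gives
\begin{equation*}
h^{(m)}(s_j,s_j)(x)\;=\;\frac{|\langle e_\alpha^{(m)},s_j\rangle|^{2}}{\langle e_\alpha^{(m)},e_\alpha^{(m)}\rangle}\,\epsilon^{(m)}(x).
\end{equation*}
Summing over $j=1,\dots,k$ pulls $\epsilon^{(m)}(x)/\langle e_\alpha^{(m)},e_\alpha^{(m)}\rangle$ outside the sum, leaving $\sum_j|\langle e_\alpha^{(m)},s_j\rangle|^{2}$ in the numerator.

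The key step is Parseval's identity in the finite-dimensional Hilbert space $\Gamma_{hol}(M,L^m)$: since $\{s_j\}$ is an orthonormal basis, the Fourier coefficients of $e_\alpha^{(m)}$ satisfy
\begin{equation*}
\sum_{j=1}^{k}|\langle e_\alpha^{(m)},s_j\rangle|^{2}\;=\;\|e_\alpha^{(m)}\|^{2}\;=\;\langle e_\alpha^{(m)},e_\alpha^{(m)}\rangle.
\end{equation*}
Plugging this back collapses the prefactor to $1$, and we obtain $\sum_j h^{(m)}(s_j,s_j)(x)=\epsilon^{(m)}(x)$, which is the asserted identity. Independence from the chosen $\alpha$ is automatic, because the left-hand side does not involve $\alpha$ and the right-hand side was already shown to descend to $M$ in \refD{eps}.

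There is essentially no hard step: the ``work'' was all done in establishing \refE{heps}, and the present statement is a direct bookkeeping consequence once one recognises the Parseval collapse. A purely internal sanity check, which one might include for completeness, is to carry out the argument in a local holomorphic frame $e$ of $L$ by writing $s_j(x)=\hat s_j(x)e^{\otimes m}(x)$ and $\lambda:=\alpha^{\otimes m}(e^{\otimes m}(x))$; then $\langle e_\alpha^{(m)},s_j\rangle=\hat s_j(x)\lambda$ by \refE{cohvec}, so both $h^{(m)}(e_\alpha^{(m)},e_\alpha^{(m)})(x)$ and $\langle e_\alpha^{(m)},e_\alpha^{(m)}\rangle$ factor through $\sum_j|\hat s_j(x)|^{2}$, and the $|\lambda|^{2}$ factors cancel after forming the quotient defining $\epsilon^{(m)}$. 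The only point requiring a brief justification is that $\{s_j\}$ actually spans (not merely is orthonormal in) $\Gamma_{hol}(M,L^m)$, which is guaranteed by the hypothesis that it is a basis and by finite dimensionality of $\Gamma_{hol}(M,L^m)$ noted in \refS{btq}.
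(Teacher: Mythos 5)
Your proof is correct and is essentially identical to the paper's own argument: both apply \refE{heps} with $s_1=s_2=s_j$, sum over $j$, and invoke Parseval's identity $\sum_j|\langle e_\alpha^{(m)},s_j\rangle|^2=\langle e_\alpha^{(m)},e_\alpha^{(m)}\rangle$ to cancel the denominator. The additional remarks on independence of $\alpha$ and the local-frame sanity check are harmless but not needed.
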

\begin{proof}
For every vector $\psi$ in a finite-dimensional hermitian
vector space with orthonormal basis ${s_j}, j=1,\ldots ,k$ the 
coefficient with respect to the basis element $s_j$ is given 
by $\psi_j=\skp {s_j}{\psi}$.
Furthermore, 
$
\skp {\psi}{\psi}=||\psi||^2=\sum_j\overline{\psi_j}\psi_j
$.
Now from \refE{heps}
$$
\sum_{j=1}^k h(s_j,s_j)(x)=
\frac {\epsm(x)}{\skp {\eam}{\eam}}
\sum_{j=1}^k \overline{\skp {\eam}{s_j}}\skp {\eam}{s_j}\ .
$$
Hence the claim.
\end{proof}

In certain special cases the functions $\epsm$ will be
constant as function of  the points of the manifold.
In this case we can apply \refP{Atr} below 
for $A=id$, the identity operator, and
obtain
\begin{equation}
\epsm =\frac {\dim\ghm}{\volu M}.
\end{equation}
Here $\volu M$ denotes the volume of the manifold with 
respect to the Liouville measure.
Now the question  arises when 
$\epsm$ will be constant, resp. when 
the measure $\Ome$ will be the standard
measure (up to a scalar).
From \refP{epssec} it follows that 
if there is a transitive group action on the manifold and
everything, e.g. K\"ahler form, bundle, metric, 
is homogenous with respect to the action this will be the case.
An example is given by 
$M=\pnc$. 
By a result of Rawnsley \cite{Raw},
 resp. Cahen, Gutt and
Rawnsley \cite{CGR1},
$\epsm\equiv const$ if and only if the quantization  is 
projectively induced. 
This means that under  the conjugate of the coherent state embedding, 
the K\"ahler form $\w$ of $M$ coincides with the pull-back of the
Fubini-Study form. Note that in general this is not the
case, see \refSS{pfsm}.


\subsection{Contravariant Berezin  symbols}
\label{SS:contra}
Recall the modified Liouville measure 
\refE{ome} and modified scalar product for
the functions on $M$ introduced in the last subsection.
\begin{definition}
Given an operator $A\in\eghm$ then a 
 {\em contravariant Berezin symbol} $\svm(A)\in\cim$
of $A$  is defined by the representation of the operator
$A$ as integral
\begin{equation}\label{E:contra}
A=\int_M\svm(A)(x)P^{(m)}_x\,\Ome(x),
\end{equation}
if such a representation exists.
\end{definition}
\begin{proposition}\label{P:contoe}
The Toeplitz operator $\Tfm$ admits a representation \refE{contra} with
\begin{equation}\label{E:contoe}
\svm(\Tfm)=f\ , 
\end{equation}
i.e. the function $f$ is a contravariant symbol of the Toeplitz operator
$\Tfm$.
Moreover, every operator $A\in\eghm$ has a contravariant symbol.
\end{proposition}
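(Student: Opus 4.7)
The plan is to verify the desired integral representation by testing against pairs of holomorphic sections, then to reduce the second assertion to Proposition~\ref{P:sur}.

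First, I would observe that since $\Tfm$ and the proposed integral both act on $\ghm$, it suffices to check the identity weakly, namely that
\begin{equation*}
\skp{s_1}{\Tfm s_2} = \int_M f(x)\,\skp{s_1}{P^{(m)}_x s_2}\,\epsm(x)\,\Omega(x)
\end{equation*}
for all $s_1,s_2\in\ghm$. The right-hand side comes from the definition of $\Ome$ in \refE{ome} together with the definition \refE{contra}.

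The key step is then to rewrite the matrix element $\skp{s_1}{P^{(m)}_x s_2}\,\epsm(x)$ using \refE{heps}, which gives exactly
\begin{equation*}
\skp{s_1}{P^{(m)}_x s_2}\cdot \epsm(x) = \hm(s_1,s_2)(x).
\end{equation*}
Substituting this into the integral and recognizing the scalar product \refE{skp}, I obtain
\begin{equation*}
\int_M f(x)\,\hm(s_1,s_2)(x)\,\Omega(x) = \skp{s_1}{f\cdot s_2}.
\end{equation*}
Since $s_1\in\ghm$, the projector $\Pim$ may be inserted for free: $\skp{s_1}{f\cdot s_2}=\skp{s_1}{\Pim(f\cdot s_2)} = \skp{s_1}{\Tfm s_2}$. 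This gives the weak identity, and because both operators map $\ghm$ to itself and $\ghm$ is finite-dimensional, the weak identity upgrades to an operator identity, yielding \refE{contoe}.

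For the final assertion, I would simply invoke Proposition~\ref{P:sur}: every $A\in\eghm$ is of the form $A=\Tma{f}^{(m)}$ for some $f\in\Cim$, and by what was just shown this $f$ serves as a contravariant symbol of $A$. The only mild subtlety to keep in mind is well-definedness of the integrand: although $\eam$ depends on the choice of $\a\in\tau^{-1}(x)$, the factors $\skp{\eam}{\cdot}$, $\eam$, and $\hm(\eam,\eam)$ transform by compensating powers according to \refE{cohtrans}, so both $P^{(m)}_x$ and $\epsm(x)$ descend to $M$; this is the only point requiring a brief verification, but it is already implicit in the definitions in \refSS{symbols} and \refSS{epsilon}.
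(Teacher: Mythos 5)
Your proof is correct and follows essentially the same route as the paper: both verify the integral representation weakly on pairs of holomorphic sections via the identity \refE{heps}, recognize the resulting integral as $\skp{s_1}{f\cdot s_2}=\skp{s_1}{\Tfm s_2}$, and then deduce the second assertion from the surjectivity of the Toeplitz map (\refP{sur}). The extra remarks on inserting the projector and on well-definedness of $P^{(m)}_x$ and $\epsm$ are fine and merely make explicit what the paper leaves implicit.
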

\begin{proof}
Let $f\in\cim$ and set
\begin{equation}
A:=\int_M f(x)P^{(m)}_x\,\Ome(x),
\end{equation}
then $\svm(A)=f$.
For arbitrary $s_1,s_2\in\ghm$ we calculate
(using \refE{heps})
\begin{multline}
\skp {s_1}{As_2}=\int_M f(x)\skp {s_1}{P^{(m)}_xs_2}\,\Ome(x)
=\int_M f(x)\hm(s_1,s_2)(x)\,\Om(x)\\
=\int_M \hm(s_1,fs_2)(x)\Om(x)=
\skp {s_1}{fs_2}=\skp {s_1}{\Tfm s_2}\ .
\end{multline}
Hence $\Tfm=A$.
As the Toeplitz map is surjective (\refP{sur}) every operator is 
a Toeplitz operator, hence has a contravariant symbol.
\end{proof}
Note that given an operator its  contravariant symbol on a fixed
level $m$ is
 not uniquely defined.

\medskip

 We introduce on $\eghm$ the Hilbert-Schmidt norm 
\begin{equation}\label{E:HS}
\skps {A}{C}{HS}=Tr(A^*\cdot C)\ .
\end{equation}
\begin{theorem}\label{T:adj}
The Toeplitz map $f\to \Tfm$ and the covariant symbol map
$A\to\sm(A)$ are adjoint:
\begin{equation}\label{E:adjoint}
\skps {A}{\Tfm}{HS}=\skpsm {\sm(A)} {f} {\epsilon} \ .
\end{equation}
\end{theorem}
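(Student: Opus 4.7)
The plan is to chain together two representations that are already in hand: the integral representation of a Toeplitz operator as a superposition of coherent projectors (Proposition \ref{P:contoe}) and the trace formula for the covariant symbol (equation \refE{covB2}), then reconcile them via the adjoint identity $\sigma^{(m)}(A^*) = \overline{\sigma^{(m)}(A)}$ from \refE{covad}.

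Concretely, I would start by unraveling the left-hand side. By the definition \refE{HS} of the Hilbert--Schmidt pairing and the integral representation $T_f^{(m)} = \int_M f(x)\, P_x^{(m)}\, \Ome(x)$ established inside the proof of \refP{contoe}, one has
\begin{equation*}
\skps{A}{\Tfm}{HS} = \Tr\!\left( A^* \int_M f(x)\, P_x^{(m)}\, \Ome(x) \right).
\end{equation*}
Since the trace is continuous and linear on the finite-dimensional space $\eghm$, it commutes with the integral (which is really a finite linear combination up to approximation, but in any case an integral of a continuous family of operators), yielding
\begin{equation*}
\skps{A}{\Tfm}{HS} = \int_M f(x)\, \Tr\!\bigl(A^* P_x^{(m)}\bigr)\, \Ome(x).
\end{equation*}

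Next, I would identify the integrand. By the trace formula \refE{covB2}, the factor $\Tr(A^* P_x^{(m)})$ is nothing but $\sm(A^*)(x)$, and by \refE{covad} this equals $\overline{\sm(A)(x)}$. Substituting gives
\begin{equation*}
\skps{A}{\Tfm}{HS} = \int_M \overline{\sm(A)(x)}\, f(x)\, \Ome(x),
\end{equation*}
which, recalling the sign convention that $\skp{\cdot}{\cdot}$ is antilinear in the first argument and that the weighted scalar product $\skpsm{\cdot}{\cdot}{\epsilon}$ is defined by integration against the modified Liouville measure $\Ome$ from \refE{ome}, is exactly $\skpsm{\sm(A)}{f}{\epsilon}$.

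No step looks seriously obstructed: the only thing requiring any care is the interchange of trace and integral, and in our setting this is automatic because $\ghm$ is finite dimensional, so the trace is a continuous linear functional on $\eghm$ and the integrand $x\mapsto f(x) A^* P_x^{(m)}$ is continuous (even real analytic) and integrable against the smooth density $\Ome$. The rest is bookkeeping with the antilinearity convention and a direct appeal to \refP{contoe}, \refE{covB2}, and \refE{covad}.
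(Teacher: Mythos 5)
Your proof is correct and follows exactly the paper's own argument: expand the Hilbert--Schmidt pairing as a trace, substitute the integral representation $T_f^{(m)}=\int_M f(x)P_x^{(m)}\,\Omega^{(m)}_\epsilon(x)$ from Proposition~\ref{P:contoe}, pull the trace inside the integral, and identify $\Tr(A^*P_x^{(m)})=\sigma^{(m)}(A^*)=\overline{\sigma^{(m)}(A)}$ via \refE{covB2} and \refE{covad}. Your extra remark justifying the interchange of trace and integral by finite-dimensionality is a harmless (and reasonable) addition to what the paper leaves implicit.
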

\begin{proof}
\begin{equation}
\langle A,\Tfm\rangle =Tr( A^*\cdot \Tfm)
=Tr( A^*\int_Mf(x)P^{(m)}_x\,\Ome(x))
=\int_Mf(x) Tr(A^*\cdot P^{(m)}_x)\Ome(x).
\end{equation}
Now applying the definition \refE{covB2} and equation \refE{covad}
\begin{equation}
\langle A,\Tfm\rangle =\int_M f(x)\sm( A^*)\Ome(x)
=\int_M \overline{\sm(A)}(x)f(x)\Ome(x)
={\langle \sm(A),f(x)\rangle} _\epsilon^{(m)}.
\end{equation}
\end{proof}
As every operator has a contravariant symbol we can also conclude
\begin{equation}\label{E:cocont}
\skps {A}{B}{HS}=\skpsm {\sm(A)} {\svm(B)} {\epsilon}. 
\end{equation}
From \refT{adj} by  using the 
surjectivity of the Toeplitz map 
we get
\begin{proposition}\label{P:syminj}
The covariant symbol map $\sm$ is injective.
\end{proposition}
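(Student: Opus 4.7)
The plan is to exploit the adjointness relation of Theorem~\ref{T:adj} together with the surjectivity of the Toeplitz map (Proposition~\ref{P:sur}). Since $\sigma^{(m)}$ is linear, injectivity is equivalent to showing that $\sigma^{(m)}(A) = 0$ forces $A = 0$.

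First I would suppose that $A \in \mathrm{End}(\Gamma_{hol}(M,L^m))$ satisfies $\sigma^{(m)}(A) = 0$. Then the right-hand side of \refE{adjoint} vanishes for every $f \in C^\infty(M)$, and hence
$$\skps{A}{\Tfm}{HS} = 0, \qquad \forall f \in C^\infty(M).$$
By Proposition~\ref{P:sur}, the Toeplitz map $f \mapsto \Tfm$ is surjective onto $\mathrm{End}(\Gamma_{hol}(M,L^m))$, so this orthogonality relation extends to $\skps{A}{B}{HS} = 0$ for every $B \in \mathrm{End}(\Gamma_{hol}(M,L^m))$.

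Finally, I would specialize to $B = A$, which gives $\skps{A}{A}{HS} = \Tr(A^*A) = 0$. Since the Hilbert-Schmidt form is a genuine inner product on the finite-dimensional space $\mathrm{End}(\Gamma_{hol}(M,L^m))$, this forces $A = 0$, proving injectivity.

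There is no real obstacle here: the argument is a one-line consequence of adjointness plus surjectivity, and the only thing to check is that one is allowed to plug in $B=A$, which is immediate as $A \in \mathrm{End}(\Gamma_{hol}(M,L^m))$ itself. The conceptual content of the statement has already been absorbed into Theorem~\ref{T:adj} and Proposition~\ref{P:sur}.
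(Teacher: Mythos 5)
Your argument is correct and is exactly the paper's intended proof: the paper derives the proposition in one line from Theorem~\ref{T:adj} together with the surjectivity of the Toeplitz map (Proposition~\ref{P:sur}), and your write-up simply makes explicit the final step of testing against $B=A$ and using positive-definiteness of the Hilbert--Schmidt form. No gaps.
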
 
Another application is the following
\begin{proposition}\label{P:Atr}
\begin{equation}\label{E:Atr}
\Tr A=\int_M\sm(A)\,\Ome.
\end{equation}
\end{proposition}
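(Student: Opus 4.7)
The plan is to deduce this from the adjointness relation of Theorem~\ref{T:adj} by evaluating it at the constant function $f = 1$. First I would observe that $T_1^{(m)} = \Pi^{(m)} \circ (1 \cdot) = \Pi^{(m)}|_{\ghm} = \mathrm{id}_{\ghm}$, so the Hilbert--Schmidt inner product on the left-hand side of \refE{adjoint} collapses:
\begin{equation*}
\skps{A}{T_1^{(m)}}{HS} = \Tr(A^* \cdot \mathrm{id}) = \Tr(A^*).
\end{equation*}
On the right-hand side, using that the modified $L^2$ pairing is antilinear in the first slot, I get
\begin{equation*}
\skpsm{\sm(A)}{1}{\epsilon} = \int_M \overline{\sm(A)(x)}\,\Ome(x).
\end{equation*}

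Next I would invoke the conjugation property \refE{covad}, namely $\overline{\sm(A)} = \sm(A^*)$, to rewrite the right-hand side as $\int_M \sm(A^*)\,\Ome$. Thus
\begin{equation*}
\Tr(A^*) = \int_M \sm(A^*)\,\Ome.
\end{equation*}
Since this identity holds for every $A \in \eghm$ and the operation $A \mapsto A^*$ is a bijection of $\eghm$, I would apply it with $A^*$ in place of $A$ to obtain the desired formula $\Tr A = \int_M \sm(A)\,\Ome$.

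There is really no obstacle here; the only subtlety is bookkeeping of conjugations, which is why the identity $T_1^{(m)} = \mathrm{id}$ together with $\sm(A^*) = \overline{\sm(A)}$ is essential to land on the covariant symbol of $A$ rather than of $A^*$. A possible alternative route --- integrating the contravariant representation $A = \int_M \svm(A)(x) P_x^{(m)} \Ome(x)$ against the trace and using $\Tr P_x^{(m)} = 1$ --- would give the analogous statement with $\svm$ instead of $\sm$, so the adjointness approach is the right one for the claim as stated.
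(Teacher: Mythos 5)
Your proposal is correct and is essentially the paper's own argument: the paper likewise sets $Id=T_1^{(m)}$ and applies the adjointness relation \refE{adjoint} with $f=1$, merely suppressing the conjugation bookkeeping that you carry out explicitly via $\sm(A^*)=\overline{\sm(A)}$. Nothing further is needed.
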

\begin{proof}
We use $Id=T_1$ and by \refE{adjoint}
$
\Tr A=\skps {A}{Id}{HS}=
\skpsm {\sm(A)}{1}{\epsilon}
$.
\end{proof}


\subsection{Berezin star product}\label{SS:berstar}
Under certain very restrictive conditions Berezin covariant
symbols can be used to construct a star product, called the
{\it Berezin star product}.
Recall that \refP{syminj} says that 
the linear symbol map
\begin{equation}\label{E:symmap}
\sm:\eghm\to \cim
\end{equation}
is injective.
Its image is a subspace $\A^{(m)}$ of $\Cim$, 
called the subspace of covariant symbols of level $m$.
If $\sm(A)$ and $\sm(B)$  are elements of this subspace the
operators $A$ and $B$ will be uniquely fixed.
Hence also $\sm(A\cdot B)$. Now one takes
\begin{equation}
\sm(A)\star_{(m)}\sm(B):=\sm(A\cdot B)
\end{equation}
as definition for an  associative and noncommutative
product $\star_{(m)}$ on $\A^{(m)}$.

It is even possible to give an analytic expression for
the resulting symbol.
For this we introduce 
the {\it two-point function}
\begin{equation}\label{E:two}
\psi^{(m)}(x,y)=
\frac {\langle \eam, \ebm\rangle \langle \ebm,\eam
\rangle }{\langle \eam,\eam\rangle \langle \ebm,\ebm \rangle }
\end{equation}
with $\a=\tau^{-1}(x)=x$ and $\beta=\tau^{-1}(y)$.
This function is well-defined on $M\times M$.
Furthermore, we have the {\it two-point symbol}
\begin{equation}\label{E:twos}
\sm(A)(x,y)
=\frac {\langle \eam,A \ebm \rangle }
{\langle \eam, \ebm \rangle }.
\end{equation}
It is the analytic extension of the real-analytic covariant symbol.
It is well-defined on an open dense subset of $M\times M$
containing
the diagonal.
Using \refE{heps} we  express 
\begin{multline}
\sm(A\cdot B)(x)=
\frac {\langle \eam,A\cdot B\, \eam\rangle }
{\langle \eam,\eam\rangle }=
\frac {\langle A^*\eam, B\, \eam\rangle }
{\langle \eam,\eam\rangle }
\\
=\int_M\hm(A^*\eam,B\eam)(y)
\frac {\Omega(y)}{\langle \eam,\eam\rangle }
=
\int_M
\frac {
\langle \eam,A\ebm\rangle \langle \ebm,B\eam\rangle}
{ \langle \ebm,\ebm\rangle }
\frac {\epsm(y)\Omega(y)}{\langle \eam,\eam\rangle }
\\
=
\int_M \sm(A)(x,y)\sm(B)(y,x)\cdot \psi^{(m)}(x,y)\cdot
\epsm(y)\Omega(y)\ .
\end{multline}

The crucial problem is how to relate different levels $m$ to define 
for all possible symbols a unique product not depending on $m$.
In certain special situations like these studied by 
Berezin himself \cite{Beress} and Cahen, Gutt, and Rawnsley
\cite{CGR1} 
the subspaces are nested into each other and the union
$\A=\bigcup_{m\in\N}\A^{(m)}$ is a dense subalgebra of $\Cim$.
Indeed, in the cases 
considered,  the manifold is a homogenous manifold and 
the epsilon function $\epsm$ is 
a constant.
A detailed analysis shows that in this case  a
star product is given.

For further examples, for which  this method works (not necessarily 
compact) see other articles by   Cahen, Gutt, and Rawnsley
\cite{CGR2,CGR3,CGR4}.
For related results see also work of Moreno and Ortega-Navarro 
\cite{MoOr}, \cite{Mor1}.
In particular,  also the work of Engli\v s
\cite{Engbk,Eng2,Eng1,Eng0}. 
Reshetikhin and Takhtajan \cite{ResTak} gave a construction
of a (formal) star product using formal integrals in 
the spirit of the Berezin's covariant symbol  construction.

\section{Berezin  transform}
\label{S:btrans}

\subsection{The definition}\label{SS:btdef}
Starting from $f\in\cim$ we can assign to it its Toeplitz operator
$\Tfm\in\eghm$ and then assign to $\Tfm$ the covariant symbol 
$\sm(\Tfm)$. It 
is again an element of $\cim$.  
\begin{definition}
The map 
\begin{equation}
\Cim\to\Cim,\qquad
f\mapsto
\Im(f):=\sm(\Tfm)
\end{equation}
is called the {\it Berezin transform (of level $m$)}.
\end{definition}

{}From the point of view of Berezin's approach 
the operator $T_f^{(m)}$ has as a  contravariant symbol $f$.
Hence $\Im$ gives a correspondence between contravariant symbols
and covariant symbols of operators.
The Berezin transform was introduced and studied by 
Berezin \cite{Beress} for certain classical symmetric 
domains in $\C^n$. These results where 
extended by Unterberger and Upmeier \cite{UnUp},
see also Engli\v s \cite{Eng1,Eng2,Engbk} 
and Engli\v s and Peetre \cite{EnPe}.
Obviously, the Berezin transform makes perfect  sense
in the compact K\"ahler case which we
consider here.

\subsection{The asymptotic expansion}\label{SS:asymp}
The results presented here  are
joint work with Alexander Karabegov \cite{KS}.
See also \cite{Schlbol} for an overview.
\begin{theorem}\label{T:btapp}
Given $x\in M$ then the Berezin transform 
$\Im(f)$ evaluated at the point $x$ has a complete asymptotic 
expansion in powers of $1/m$ as $m\to\infty$
\begin{equation}\label{E:btapp}
\Im(f)(x)\quad\sim \quad\sum_{i=0}^\infty I_i(f)(x)\frac {1}{m^i}, 
\end{equation}
where   $I_i:\cim\to\cim$ are maps with
\begin{equation}
I_0(f)=f,\qquad I_1(f)=\Delta f.
\end{equation} 
\end{theorem}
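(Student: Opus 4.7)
The plan is to express $I^{(m)}(f)(x)$ as an integral operator whose kernel is built from coherent states, and then to apply a stationary-phase/Laplace expansion to that integral once one has sufficient off-diagonal information about the Bergman/Szeg\H{o} kernel. Concretely, expanding the definition of the covariant symbol and using the reproducing property of $\Pi^{(m)}$ together with \refE{heps} yields the identity
\begin{equation*}
I^{(m)}(f)(x)=\frac{\langle \eam, f\cdot\eam\rangle}{\langle \eam,\eam\rangle}
=\int_M f(y)\,\psi^{(m)}(x,y)\,\epsm(y)\,\Om(y),
\end{equation*}
with $\psi^{(m)}$ the two-point function of \refE{two}. Thus the whole problem is reduced to an asymptotic analysis of the kernel $\psi^{(m)}(x,y)\,\epsm(y)$ as $m\to\infty$.

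Next I would transfer the analysis to the circle bundle $Q$ introduced in \refS{global}, where under \refE{sechident} the coherent vectors correspond to the reproducing kernel (Szeg\H{o} kernel) of the Hardy space $\Hc$ restricted to the $m$-th Fourier mode $\Hm$. Boutet de Monvel--Sjöstrand's microlocal description of $\Pi$ as a Fourier integral operator with complex phase then gives a parametrix for this kernel, and in particular encodes both the exponential off-diagonal decay of $\psi^{(m)}$ as $m\to\infty$ and a complete local asymptotic expansion near the diagonal. The exponential decay allows one to localise the integral to an arbitrarily small neighbourhood of $x$, modulo $O(m^{-N})$ errors.

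On that small neighbourhood I would pick K\"ahler normal coordinates centred at $x$ (Bochner/Calabi coordinates), in which the local K\"ahler potential agrees with $|z|^2$ up to quartic terms and $\psi^{(m)}(x,y)$ is, to leading order, $\exp(-m\,D(x,y))$ where $D$ is Calabi's diastasis, a non-degenerate Morse function in $y$ with unique critical point at $y=x$. The Laplace method (or stationary phase with purely imaginary phase) then delivers a complete asymptotic expansion in powers of $1/m$ whose coefficients are differential operators applied to $f$, after Taylor-expanding $f$, the volume factor $\epsm$, and the subleading terms of the Bergman kernel. Reading off the $m^0$-term gives $I_0(f)=f$ because the Gaussian integrates to the normalising factor that matches the denominator; reading off the $m^{-1}$-term, one collects contributions from the quadratic Taylor coefficients of $f$ (producing a Laplacian against the inverse K\"ahler metric), the subleading Tian--Zelditch coefficient of the Bergman kernel, and the expansion of $\epsm$, and a direct computation in normal coordinates shows all curvature contributions cancel except $\Delta f$.

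The genuinely hard step is the rigorous off-diagonal control of the Szeg\H{o}/Bergman kernel together with uniformity of the remainder estimates in $x$; once that machinery is in place, the expansion itself is essentially a bookkeeping exercise in Laplace's method on a compact K\"ahler manifold. Identifying $I_1(f)=\Delta f$ also requires carefully matching sign and normalisation conventions for $\w$, $h$ and $\Delta$ with those fixed in the setup of \refS{set}, which is the step most likely to introduce computational errors.
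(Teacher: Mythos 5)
Your proposal is correct, and for the analytic core it follows the same route as the paper's (i.e.\ \cite{KS}'s) proof: your identity $\Im(f)(x)=\int_M f(y)\,\psi^{(m)}(x,y)\,\epsm(y)\,\Omega(y)$ is exactly the integral representation of \refP{kernelint} (note that $\epsm(y)=\skp{\ebm}{\ebm}=u_m(y)$ for $\beta\in Q$, so the two kernels coincide), and in both arguments the existence of the expansion rests on the Boutet de Monvel--Sj\"ostrand description of the Szeg\"o projector on the circle bundle $Q$, which supplies both the off-diagonal decay used to localise near $x$ and the near-diagonal expansion of the Bergman kernel fed into the Laplace/stationary-phase method; this off-diagonal control is precisely the key technical input the paper attributes to \cite{KS}, extending Zelditch's on-diagonal result. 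Where you genuinely diverge is in identifying the coefficients. You propose to read off $I_0=\mathrm{id}$ and $I_1=\Delta$ by a direct Taylor--Gaussian computation in K\"ahler normal coordinates, verifying that the Tian--Zelditch coefficient $b_1$ and the expansion of $\epsm$ cancel at order $1/m$; this is workable and more self-contained (a useful consistency check you could add: $\Im(1)=1$ holds exactly, which forces the curvature contributions to cancel at that order), but it is, as you concede, exactly where normalisation errors enter, and it only yields the low-order terms one is willing to compute by hand. The paper instead matches the entire asymptotic series with Karabegov's \emph{formal} Berezin transform of an explicitly specified star product with separation of variables, so that $I=1+\nu\Delta+\dots$ follows at once from Karabegov's general theory; this identifies all coefficients simultaneously and delivers the structural consequences of \refT{bstarad} (locality, separation-of-variables type, classifying forms) as a by-product, at the price of invoking the star-product machinery of \refSS{kara}.
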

Here the $\Delta$ is the  usual Laplacian with respect
to the metric given by the  K\"ahler form $\w$.

Complete asymptotic expansion means the following.
Given $f\in\Cim$, $x\in M$ and an $r\in\N$ then there
exists  a positive constant $A$ such that
\begin{equation}
{\left|\Im(f)(x)- \sum_{i=0}^{r-1} I_i(f)(x)\frac {1}{m^i}\right|}_{\infty}
\quad\le\quad \frac {A}{m^r}\ .
\end{equation}
In \refSS{berg}
I will give some remarks on the proof.
But before I present  you a nice application

\subsection{Norm preservation of the BT operators}\label{SS:normp}
In \cite{Schlbia98} I conjectured \refE{btapp} (which is now a
mathematical
result) 
and showed how such an asymptotic expansion supplies a different
proof  of \refT{approx}, part (a).
For completeness I  reproduce the proof here.
\begin{proposition}
\begin{equation}\label{E:symine}
|\Im(f)|_\infty=|\sm(\Tfm)|_\infty\quad\le \quad||\Tfm||\quad\le\quad   
|f|_\infty\ .
\end{equation}
\end{proposition}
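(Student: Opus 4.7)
The plan is to verify the chain termwise, since it breaks into three independent pieces, each of which is elementary once one recognizes which fact of the Berezin calculus to invoke.

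\emph{Step 1: the equality.} The identity $I^{(m)}(f)=\sigma^{(m)}(T_f^{(m)})$ is nothing but the definition of the Berezin transform given in \refSS{btdef}, so after taking sup norms on $M$ this is immediate.

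\emph{Step 2: the middle inequality.} I would prove the pointwise estimate $|\sigma^{(m)}(A)(x)|\le \|A\|$ for an arbitrary $A\in\mathrm{End}(\Gamma_{hol}(M,L^m))$, and then specialize to $A=T_f^{(m)}$. Fixing $x\in M$ and a lift $\alpha\in\tau^{-1}(x)$, the coherent vector $e_\alpha^{(m)}\in\Gamma_{hol}(M,L^m)$ is nonzero by \refE{nonzero}, so by Cauchy--Schwarz
\[
|\sigma^{(m)}(A)(x)|
=\frac{|\langle e_\alpha^{(m)},A e_\alpha^{(m)}\rangle|}{\langle e_\alpha^{(m)},e_\alpha^{(m)}\rangle}
\le \frac{\|e_\alpha^{(m)}\|\cdot\|A e_\alpha^{(m)}\|}{\|e_\alpha^{(m)}\|^{2}}
\le \|A\|.
\]
Taking the sup over $x\in M$ gives $|\sigma^{(m)}(A)|_\infty\le \|A\|$.

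\emph{Step 3: the last inequality.} Here I would decompose $T_f^{(m)}=\Pi^{(m)}\circ M_f$ on $\Gamma_{hol}(M,L^m)\subset\mathrm{L}^2(M,L^m)$, where $M_f$ is pointwise multiplication by $f$. The projector $\Pi^{(m)}$ is an orthogonal projection and hence has operator norm $\le 1$. For $s\in\Gamma_{hol}(M,L^m)$, using the definition of the norm in \refE{skp},
\[
\|M_f s\|^{2}=\int_M h^{(m)}(fs,fs)\,\Omega=\int_M|f|^{2}\,h^{(m)}(s,s)\,\Omega\le |f|_\infty^{2}\,\|s\|^{2},
\]
so $\|M_f\|\le |f|_\infty$. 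Combining, $\|T_f^{(m)}\|\le \|\Pi^{(m)}\|\cdot\|M_f\|\le |f|_\infty$.

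There is no real obstacle here; the content of the proposition is entirely contained in the Cauchy--Schwarz estimate for the coherent vectors together with the standard norm bound for a Toeplitz-type operator. The nontrivial companion statement, which this proposition is a stepping stone toward (namely the reverse inequality $|f|_\infty-C/m\le\|T_f^{(m)}\|$ of \refE{norma}), is what genuinely needs the asymptotic expansion \refT{btapp}.
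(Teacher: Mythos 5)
Your proof is correct and follows essentially the same route as the paper: the middle inequality via Cauchy--Schwarz applied to the coherent vectors, and the last inequality via $\|T_f^{(m)}\|\le\|\Pi^{(m)}\|\,\|M_f^{(m)}\|\le|f|_\infty$ using the pointwise bound on the multiplication operator. Your closing remark correctly identifies that the substance of Theorem \ref{T:approx}(a) lies in the reverse inequality, which is where the asymptotic expansion of the Berezin transform enters.
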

\begin{proof}
Using Cauchy-Schwarz inequality we calculate ($x=\tau(\alpha)$)
\begin{equation}
| \sm(\Tfm)(x)|^2=
\frac {|\skp {\eam}{\Tfm\eam}|^2}{{\skp {\eam}{\eam}}^2}\le
\frac {\skp {\Tfm\eam}{\Tfm\eam}}{\skp {\eam}{\eam}}\le
||\Tfm||^2\ .
\end{equation}
Here the last inequality  follows from the definition of the operator norm.
This shows the first inequality in \refE{symine}.
For the second inequality introduce the multiplication
operator $M_f^{(m)}$ on $\gulm$. Then 
$\ ||\Tfm||=||\Pim\,M_f^{(m)}\,\Pim||\le ||M_f^{(m)}||\ $ and
for  $\varphi\in\gulm$,  $\varphi\ne 0$
\begin{equation}
\frac {{||M_f^{(m)} \varphi||}^2}{||\varphi||^2}=
\frac {\int_M h^{(m)}(f \varphi,f \varphi)\Omega}
 {\int_M h^{(m)}(\varphi,\varphi)\Omega}
=
\frac {\int_M f(z)\overline{f(z)}h^{(m)}(\varphi,\varphi)\Omega}
{\int_M h^{(m)}(\varphi,\varphi)\Omega}
\le
|f|{}_\infty^2\ .  
\end{equation}
Hence,
\begin{equation}
||\Tfm||\le ||M_f^{(m)}||=\sup_{\varphi\ne 0}
\frac {||M_f^{(m)}\varphi||}{||\varphi||}\le |f|_\infty .
\end{equation}
\end{proof}
\begin{proof} (\refT{approx} part (a).)
Choose as $x_e\in M$ a point with $|f(x_e)|=|f|_\infty$.
From the fact that the  Berezin
transform has
as leading term the identity   it follows that
$\ |(I^{(m)}f)(x_e)-f(x_e)|\le C/m\ $ with a suitable constant
$C$.
Hence,
$\ \left| |f(x_e)|-|(I^{(m)}f)(x_e)| \right| \le C/m\ $
and 
\begin{equation}\label{E:absch}
|f|_\infty-\frac Cm=|f(x_e)|-\frac Cm\quad\le\quad
|(I^{(m)}f)(x_e)|\quad\le\quad |I^{(m)}f|_\infty\ .
\end{equation}
Putting \refE{symine} and \refE{absch} together we obtain
\begin{equation}\label{E:thma}
|f|_\infty-\frac Cm\quad\le\quad ||T_f^{(m)}||\quad\le\quad
|f|_\infty\ .
\end{equation}
\end{proof}

\subsection{Bergman kernel}
\label{SS:berg}
To understand the Berezin transform better we have to study the
Bergman kernel.
Recall from \refS{global}
the Szeg\"o projectors 
$ \Pi:\Lqv\to \Hc$ and its components 
$\pimh:\Lqv\to\Hm$, the Bergman projectors.
The Bergman projectors have smooth integral kernels,
the {\it Bergman kernels} 
$\Bm(\alpha,\beta)$ defined on $Q\times Q$, i.e.
\begin{equation}
\pimh(\psi)(\alpha)=\int_Q\Bm(\alpha,\beta)\psi(\beta)\mu(\beta).
\end{equation}
The Bergman kernels can be expressed with the help of the
coherent vectors.
\begin{proposition}\label{P:kernel}
\begin{equation}
\Bm(\alpha,\beta)=\psi_{\ebm}(\alpha)=
\overline{\psi_{\eam}(\beta)}=\skp{\eam}{\ebm}.
\end{equation}
\end{proposition}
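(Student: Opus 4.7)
The plan is to establish the three equalities by identifying $\mathcal{B}_m$ as the reproducing kernel of $\mathcal{H}^{(m)}$ and exploiting the defining property of coherent vectors together with the isometry $\gamma_m$ from \refE{sechident}.

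First I would fix $\alpha \in Q$ and take any holomorphic section $s \in \ghm$. By the defining property \refE{cohvec} of the coherent vector,
\begin{equation*}
\psi_s(\alpha) \;=\; \langle e^{(m)}_\alpha, s\rangle_{\Lpm}.
\end{equation*}
Since $\gamma_m$ is an isometry onto $\Hm$, the right-hand side equals $\langle \psi_{e^{(m)}_\alpha}, \psi_s\rangle_{\Lqv}$, which is an integral over $Q$ against $\mu$. This gives the reproducing identity
\begin{equation*}
\psi_s(\alpha) \;=\; \int_Q \overline{\psi_{e^{(m)}_\alpha}(\beta)}\; \psi_s(\beta)\, \mu(\beta) \qquad \forall\, \psi_s \in \Hm.
\end{equation*}
On the other hand, because $\pimh$ is the orthogonal projector onto $\Hm$ with integral kernel $\Bm$, for every $\psi \in \Hm$ we have $\psi(\alpha) = (\pimh \psi)(\alpha) = \int_Q \Bm(\alpha,\beta)\, \psi(\beta)\, \mu(\beta)$.

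Next I would invoke uniqueness of the reproducing kernel. Since both $\beta \mapsto \Bm(\alpha,\beta)$ and $\beta \mapsto \overline{\psi_{e^{(m)}_\alpha}(\beta)}$ lie in $\Hm$ (viewed in the appropriate variable) and reproduce every $\psi \in \Hm$, the difference is orthogonal to all of $\Hm$ and lies in $\Hm$, hence vanishes. This yields $\Bm(\alpha,\beta) = \overline{\psi_{e^{(m)}_\alpha}(\beta)}$. Rewriting the right-hand side using \refE{cohvec} again, $\psi_{e^{(m)}_\alpha}(\beta) = \langle e^{(m)}_\beta, e^{(m)}_\alpha\rangle$, and passing to the complex conjugate (using that $\langle\cdot,\cdot\rangle$ is hermitian) gives $\overline{\psi_{e^{(m)}_\alpha}(\beta)} = \langle e^{(m)}_\alpha, e^{(m)}_\beta\rangle$. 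Finally, since $\langle e^{(m)}_\alpha, e^{(m)}_\beta\rangle = \psi_{e^{(m)}_\beta}(\alpha)$ by the defining property of $e^{(m)}_\beta$, all three expressions in the proposition coincide.

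The only slightly delicate point is ensuring that the reproducing identity derived from \refE{cohvec}, which a priori holds only for $\psi$ of the form $\psi_s$ with $s$ holomorphic, indeed characterises $\Bm(\alpha,\cdot)$ as an element of $\Hm$. But this is immediate since the map $\gamma_m$ is an isometry onto $\Hm$, so $\{\psi_s : s \in \ghm\}$ is exactly $\Hm$; no density argument beyond the isomorphism \refE{sechident} is needed. The rest is bookkeeping with the anti-linear-in-the-first-argument convention adopted in \refE{skp}.
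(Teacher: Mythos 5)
Your argument is correct and is precisely the standard reproducing-kernel computation: the two easy equalities follow from the defining property \refE{cohvec} of the coherent vectors together with hermitian symmetry, and the identification with $\Bm$ follows from uniqueness of the reproducing kernel of the finite-dimensional space $\Hm$ via the isometry \refE{sechident}. The paper gives no proof here, deferring to \cite{KS} and \cite{Schlbol}, where essentially this same argument appears, so there is nothing substantive to compare; the only cosmetic caveat is that it is the complex conjugate $\beta\mapsto\overline{\Bm(\alpha,\beta)}$ that lies in $\Hm$, which your parenthetical ``viewed in the appropriate variable'' already acknowledges.
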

\noindent
For the proofs of this and the following propositions see
\cite{KS}, or \cite{Schlbol}.

Let $x,y\in M$ and choose $\alpha,\beta\in Q$ with
$\tau(\alpha)=x$ and $\tau(\beta)=y$ then the functions
\begin{equation}\label{E:um}
u_m(x):=\Bm(\alpha,\alpha)=
\skp{\eam}{\eam},
\end{equation}
\begin{equation}\label{E:vm}
v_m(x,y):=\Bm(\alpha,\beta)\cdot \Bm(\beta,\alpha)=
 \skp{\eam}{\ebm}\cdot \skp{\ebm}{\eam}
\end{equation}
are well-defined on $M$ and on $M\times M$ respectively.
The following proposition gives an integral representation of the
Berezin transform.
\begin{proposition}\label{P:kernelint}
\begin{equation}\label{E:kernelint}
\begin{aligned}
\left(\Im(f)\right)(x)&=\frac 1{\Bm(\alpha,\alpha)}
\int_Q \Bm(\alpha,\beta)\Bm(\beta,\alpha)\tau^*f(\beta)\mu(\beta)
\\
&=
\frac 1{u_m(x)}
\int_M v_m(x,y)f(y)\Omega(y)\ .
\end{aligned}
\end{equation}
\end{proposition}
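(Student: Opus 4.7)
The plan is to unwind the definition of the Berezin transform and then convert the resulting inner product into an integral, first over $Q$ and then over $M$, using the coherent-vector representation of the Bergman kernel from Proposition \ref{P:kernel}.

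First I would start from the definitions: $\Im(f)(x) = \sm(\Tfm)(x) = \skp{\eam}{\Tfm\eam}/\skp{\eam}{\eam}$ for any $\alpha \in \tau^{-1}(x)$. Since $\eam$ is holomorphic and $\Pim$ is the orthogonal projector onto $\ghm$, we have $\Pim\eam=\eam$, and by selfadjointness
\begin{equation*}
\skp{\eam}{\Tfm\eam}=\skp{\eam}{\Pim(f\cdot\eam)}=\skp{\Pim\eam}{f\cdot\eam}=\skp{\eam}{f\cdot\eam}.
\end{equation*}
Thus $\Im(f)(x) = \skp{\eam}{f\cdot\eam}/\skp{\eam}{\eam}$, and it remains to express the numerator as the claimed integral.

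Next I would use the isometry $\gamma_m\colon\Lpm\to\Lqv$ of \refE{secident} to rewrite the inner product in terms of the associated functions on $Q$. For any section $s\in\ghm$, $\psi_{f\cdot s}(\beta)=\beta^{\otimes m}(f(\tau\beta)s(\tau\beta))=(\tau^*f)(\beta)\psi_s(\beta)$, so
\begin{equation*}
\skp{\eam}{f\cdot\eam}=\int_Q \overline{\psi_{\eam}(\beta)}\,(\tau^*f)(\beta)\,\psi_{\eam}(\beta)\,\mu(\beta).
\end{equation*}
By \refP{kernel}, $\psi_{\eam}(\beta)=\skp{\ebm}{\eam}=\Bm(\beta,\alpha)$ and $\overline{\psi_{\eam}(\beta)}=\psi_{\ebm}(\alpha)=\Bm(\alpha,\beta)$. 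Substituting and dividing by $\Bm(\alpha,\alpha)=\skp{\eam}{\eam}$ yields the first equality in \refE{kernelint}.

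For the second equality, I would check that the integrand is $S^1$-invariant in $\beta$ so it descends to a function on $M$. Using the transformation rule \refE{cohtrans}, if $\beta'=c\beta$ with $|c|=1$ then $\Bm(\alpha,c\beta)=\bar c^{\,m}\Bm(\alpha,\beta)$ and $\Bm(c\beta,\alpha)=c^{m}\Bm(\beta,\alpha)$, so the product is unchanged; $(\tau^*f)$ is obviously invariant. Applying the identity $\int_Q(\tau^*G)\mu=\int_M G\,\Omega$ with $G(y)=v_m(x,y)f(y)/u_m(x)$ (the $\alpha$ is held fixed, giving a well-defined function of $y=\tau\beta$) produces the claimed second equality. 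The only subtlety is to notice that no serious obstacle arises here: the whole statement is essentially a bookkeeping translation, and the only point where one must be careful is the covariance check ensuring that the integrand on $Q$ descends to $M\times M$ so that the integration can be transferred to $M$ via the Liouville measure.
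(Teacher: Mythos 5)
Your proof is correct and is essentially the standard argument: the paper itself only cites \cite{KS} and \cite{Schlbol} for this proposition, and the computation there proceeds exactly as you do — unwind $\Im(f)(x)=\skp{\eam}{\Pim(f\eam)}/\skp{\eam}{\eam}$, drop the projector by self-adjointness since $\eam\in\ghm$, transfer the inner product to $Q$ via the isometry $\gamma_m$, and identify $\psi_{\eam}(\beta)$ with the Bergman kernel using \refP{kernel}. The $S^1$-invariance check at the end is the right thing to verify, although the paper already records it implicitly by asserting that $u_m$ and $v_m$ in \refE{um}--\refE{vm} are well-defined on $M$ and $M\times M$.
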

Typically, asymptotic expansions can be obtained using 
stationary phase integrals. But for such an asymptotic expansion
of the integral representation of the Berezin transform we
will not only need an asymptotic expansion of the Bergman kernel
along the diagonal (which is well-known) but in a
neighbourhood of it.
This is one of the key results obtained in \cite{KS}.
It is based on works of Boutet de Monvel and Sj\"ostrand 
\cite{BS} on the Szeg\"o kernel and in generalization of a result of
Zelditch \cite{Zel} on the Bergman kernel on the diagonal.
The integral representation 
is used then to prove
the existence  of the 
asymptotic expansion of the Berezin transform.

Having such an asymptotic expansion it still remains to
identify its terms.
As it was explained in \refSS{kara}, 
Karabegov assigns to every
formal 
deformation quantizations 
with the ``separation of variables'' property 
a  {\em formal Berezin transform} $I$. 
In \cite{KS} it is shown that 
there is an explicitely specified 
star product $\ \star\ $ (see Theorem 5.9 in \cite{KS})
with associated formal Berezin transform such that 
if we replace 
$\frac 1m$ by the formal variable $\nu$ in the asymptotic  expansion of
the 
Berezin transform $\Im f(x)$ we obtain
$I(f)(x)$.
This finally proves \refT{btapp}.
We will exhibit the star product $\ \star\ $ in the next section.

\subsection{Identification of the BT star product}\label{SS:ident}

Moreover in \cite{KS} there is another object introduced, the
{\it twisted product}
\begin{equation}
R^{(m)}(f,g):=\sm(\Tfm\cdot \Tgm) \ .
\end{equation}
Also for it the 
existence of a complete asymptotic expansion was shown.
It was identified with a twisted formal product. 
This  allows the identification of the BT star product with 
a special star product  within the classification of
Karabegov. From this identification the properties
of \refT{bstarad} of
locality, separation of variables type, and the calculation to
the classifying forms and classes for the BT star product
follows.

As already announced in \refSS{kara},
the BT star product $\star_{BT}$ is the opposite of the dual star product
of a certain star product $\star$.
To identify $\star$ we will give its 
classifying Karabegov form $\widehat{\w}$ .
As already mentioned above, Zelditch \cite{Zel} proved that the
the function $u_m$ \refE{um} has a complete asymptotic expansion 
in powers of $1/m$. 
In detail he showed
\begin{equation}
u_m(x)\sim m^n\sum_{k=0}^\infty \frac 1{m^k}\,b_k(x), \quad
b_0=1.
\end{equation}
If we replace in the expansion 
$1/m$ by the formal variable $\nu$ we obtain a formal function
$s$
defined by
\begin{equation}
e^{s}(x)=
\sum_{k=0}^\infty \nu^k\,b_k(x).
\end{equation}
 Now take as formal potential  \refE{formp}
$$\widehat{\Phi}=\frac 1\nu \Phi_{-1}+s,
$$  
where $\Phi_{-1}$ is the local K\"ahler potential of the 
K\"ahler form $\w=\w_{-1}$.
Then $\widehat{\w}=\i\partial\bar\partial\widehat{\Phi}$.
It might be also written in the form
\begin{equation}\label{E:fk}
\widehat{\w}=\frac 1\nu \w+\F(\i\partial\bar\partial\log\Bm(\a,\a)).
\end{equation}
Here we denote the replacement of $1/m$ by 
the formal variable $\nu$ by the symbol $\F$.

\subsection{Pullback of the Fubini-Study form}\label{SS:pfsm}
Starting from the  K\"ahler manifold $(M,\omega)$ and after choosing an
orthonormal basis of the space $\ghm$ we obtain an embedding 
$$\phi^{(m)}:M\to\P^{N(m)}$$ 
of $M$ into projective space of dimension $N(m)$.
On  $\P^{N(m)}$ we have the standard K\"ahler form, the Fubini-Study  form
$\omega_{FS}$. 
The pull-back
$(\phi^{(m)})^*\omega_{FS}$ will not depend on the orthogonal basis
chosen for the embedding.  But
in general it will not coincide with 
a scalar multiple of the K\"ahler form $\omega$ we started with
(see \cite{BerSchlcse} for a thorough discussion of the
situation).

It was shown by Zelditch \cite{Zel}, by generalizing a result
of Tian \cite{Tian}, that  
$(\Phi^{(m)})^*\omega_{FS}$ admits a complete asymptotic expansion in 
powers of $\frac 1m$ as $m\to\infty$.
In fact it is related to the asymptotic expansion of the
Bergman kernel  \refE{um} along the diagonal. 
The pull-back can be given as
\cite[Prop.9]{Zel}
\begin{equation}
\left(\phi^{(m)}\right)^*\w_{FS}=m\w+\i\partial\bar\partial\log u_m(x)\ .
\end{equation}
If we again replace $1/m$ by $\nu$ we obtain via \refE{fk}
the Karabegov form introduced in \refSS{kara}
\begin{equation}
\widehat{\w} =\F(\left(\phi^{(m)}\right)^*\w_{FS}).
\end{equation}




\end{document}